\documentclass[11pt]{amsart}
\usepackage{amsmath,amssymb,latexsym,soul, tikz}
\usepackage{cite,mathrsfs,amsfonts}
\usepackage{color,enumitem,graphicx}
\usepackage[colorlinks=true,urlcolor=blue,
citecolor=red,linkcolor=blue,linktocpage,pdfpagelabels,
bookmarksnumbered,bookmarksopen]{hyperref}
\usepackage[english]{babel}

\usepackage[left=3cm,right=3cm,top=3cm,bottom=3cm]{geometry}

\usepackage[hyperpageref]{backref}
\usepackage{mathrsfs}
\usepackage{lmodern}

\numberwithin{equation}{section}

\newtheorem{theorem}{Theorem}[section]
\newtheorem{lemma}[theorem]{Lemma}

\newtheorem{proposition}[theorem]{Proposition}

\theoremstyle{definition}

\newtheorem{remark}[theorem]{Remark}

\newcommand{\N}{{\mathbb N}}

\newcommand{\R}{{\mathbb R}}
\renewcommand{\S}{{\mathbb S}}
\newcommand{\eps}{\varepsilon}

\newcommand{\weakto}{\rightharpoonup}

\newcommand{\weakstarto}{\stackrel{*}{\rightharpoonup}}

\newcommand{\cal}{\mathcal}
\def\Xint#1{\mathchoice
{\XXint\displaystyle\textstyle{#1}}%
{\XXint\textstyle\scriptstyle{#1}}%
{\XXint\scriptstyle\scriptscriptstyle{#1}}%
{\XXint\scriptscriptstyle\scriptscriptstyle{#1}}%
\!\int}
\def\XXint#1#2#3{{\setbox0=\hbox{$#1{#2#3}{\int}$ }
\vcenter{\hbox{$#2#3$ }}\kern-.6\wd0}}

\def\dashint{\Xint-}

\title{Nonlocal problems at nearly critical growth}

\author[S.\ Mosconi]{Sunra Mosconi}
\address[S.\ Mosconi]{Dipartimento di Informatica
\newline\indent
Universit\`a degli Studi di Verona
\newline\indent
C\'a Vignal 2, Strada Le Grazie 15, 37134 Verona, Italy}
\email{sunra.mosconi@univr.it}

\author[M.\ Squassina]{Marco Squassina}
\address[M.\ Squassina]{Dipartimento di Informatica
\newline\indent
Universit\`a degli Studi di Verona
\newline\indent
C\'a Vignal 2, Strada Le Grazie 15, 37134 Verona, Italy}
\email{marco.squassina@univr.it}

\subjclass[2010]{34K37, 58K05}

\keywords{Nonlinear nonlocal equation, critical embedding, nearly critical nonlinearities.}

\thanks{The authors were partially supported by Gruppo Nazionale per l'Analisi Matematica, la Probabilit\`a e le loro Applicazioni (INdAM)}

\begin{document}

\begin{abstract}
We study the asymptotic behavior of solutions to the nonlocal nonlinear 
equation $(-\Delta_p)^s u=|u|^{q-2}u$ in a bounded domain $\Omega\subset\R^N$ as $q$ approaches 
the critical Sobolev exponent $p^*=Np/(N-ps)$. We prove that ground state solutions concentrate at a single point 
$\bar x\in \overline\Omega$ and analyze the asymptotic behavior for sequences of solutions at higher energy levels. 
In the semi-linear case $p=2,$ we prove that for smooth domains the concentration point $\bar x$ cannot lie on the 
boundary, and identify its location in the case of annular domains.
\end{abstract}

\maketitle

%\begin{center}
%	\begin{minipage}{9cm}
%		\small
%		\tableofcontents
%	\end{minipage}
%\end{center}
%
%\medskip

\section{Introduction and main results}
Let  $\Omega$ be a smooth bounded domain of $\R^N$, $N>ps$, $s\in (0,1)$ and $p>1$. 
For any sufficiently small $\eps>0,$ we consider the nonlocal nonlinear problem
\begin{equation}
\begin{cases}
\label{probcrit}
(-\Delta_p)^s u=|u|^{p^*-2-\eps}u, & \text{in $\Omega$,} \\
\,\, u=0 & \text{in $\R^N\setminus\Omega$,}
\end{cases}
\end{equation}
where $p^*=Np/(N-sp)$ is the critical exponent for the immersion of
\[
W^{s,p}_0(\Omega):=\left\{u\in L^p(\R^N):
\int_{\R^{2N}}\frac{|u(x)-u(y)|^p}{|x-y|^{N+ps}}\, dx<+\infty,\,\,\, u=0 \text{ in $\R^N\setminus\Omega$}\right\}
\]
%(equipped with the Gagliardo norm $[\ ]_{s,p}$) 
into the space $L^q(\Omega)$.\ By a weak solution to problem
\eqref{probcrit} we mean a critical point for the $C^1$ functional $I_\eps:W^{s,p}_0(\Omega)\to\R$ defined by
$$
I_\eps(u)=\frac{1}{p}\int_{\R^{2N}}\frac{|u(x)-u(y)|^p}{|x-y|^{N+sp}}\, dx\, dy-
\frac{1}{p^*-\eps}\int_{\Omega}|u|^{p^*-\eps}dx.
$$
The nonlinear operator $(-\Delta_p)^s : W^{s,p}_0(\Omega)\to W^{-s,p}(\Omega)$ is defined (up to a multiplicative constant 
which we will ignore in the following) as the differential of the first term in $I_\eps$ and it can be represented, on smooth functions, by
\begin{equation*}
	%	\label{plap}
	(-\Delta_p)^s \, u(x)= 2 \lim_{\varepsilon \searrow 0} \int_{\mathbb{R}^N \setminus B_\varepsilon(x)}
	\frac{|u(x) - u(y)|^{p-2}\, (u(x) - u(y))}{|x - y|^{N+s\,p}}\, dy, \qquad x \in \mathbb{R}^N.
\end{equation*}
Solutions of \eqref{probcrit} inherit some mild smoothness when seen as general non-homogeneous non-local equations. The regularity theory for $p\neq 2$ is far from complete, however the basic continuity instances of it are covered in \cite{KMS}, \cite{BPa} at the interior and in \cite{IMS} at the boundary.\\
In this paper, we are interested in the asymptotic behavior of a sequence of solutions $\{u_\eps\}_{\eps>0}$ 
to \eqref{probcrit} as $\eps\searrow 0$, as determined by the limit energy $c=\lim_{\eps\to 0} I_\eps(u_\eps)$. 

\vskip 2pt

The interest in such ``nearly critical" problems arises from the fact that for $\eps>0$ compactness is recovered and the problem is more easily solved,  hopefully providing in the limit a solution to the non-compact problem at $\eps=0$.\ In many cases, however, the validity of a Pohozaev identity rules out existence of nontrivial solutions for $\eps=0$, and the asymptotic behavior of the approximating solutions describes the phenomenon of lack of solutions in the limit.

\vskip 2pt

In the seminal paper \cite{AP}, the asymptotic behaviour of the (unique and radial) solution $u_\eps$
\begin{equation}
\label{locprob}
\begin{cases}
-\Delta u_\eps=u_\eps^{2^*-\eps},\quad u_\eps>0 &\text{ in $B$},\\
u_\eps=0 &\text{on $\partial B$}
\end{cases}
\quad \text{where $B$ is a ball in $\R^3$}
\end{equation}
is considered, showing, among other things, that $u_\eps$ concentrates at a single point, the center of $B$, at a rate $\max u_\eps=u_\eps(0)\simeq \sqrt{\eps}$. This kind of results were extended and refined in \cite{BP}.\  For general smooth domains, where uniqueness of solutions (and nonexistence of the latter for $\eps=0$) to \eqref{locprob} is lost, the same kind of behavior is proved in \cite{H}, \cite{R} for the {\em ground states} of \eqref{locprob}, namely, nontrivial solutions minimizing the associated energy functional. Indeed, regardless of the existence of positive solutions of the limiting equation, ground states {\em always} concentrate {\em all} their mass at some point, which is therefore called the point of concentration. Through a rather fine analysis, the concentration point is shown to be a critical point of the Robin function of $\Omega$. For smooth domains, this implies that the concentration points cannot belong to $\partial\Omega$, while for nonsmooth domain the boundary concentration phenomenon can happen, as shown e.g.\ in \cite{FGM}.
\vskip 2pt

For more general, nonlinear equations, the situation is less clear. In \cite{GAPA} the concentration of ground states is proved for the $p$-Laplacian via critical point methods, while in \cite{Pal1} via $\Gamma$-convergence ones (see the latter for more references on this approach). In \cite{MS} more general and non regular operators are considered. However, the location of concentration points for ground states is not clear, (even trying to prove that they do not belong to $\partial\Omega$ in smooth domains), and precise asymptotic behavior of the maxima are even less so. It is worth noting, however, that for a different but related problem involving the $p$-Laplacian, the location of  concentration points has been determined with the technique of $p$-harmonic transplantation, see \cite{flucher}.
\vskip2pt

Regarding the nonlocal problem \eqref{probcrit}, the semi-linear case $p=2$ is considered in \cite{PPS} with a $\Gamma$-convergence approach and in  \cite{PP} via profile decomposition. The latter approach relies on the Hilbert structure to take advantage of abstract profile decomposition theory, but, as shown in \cite{J}, no such precise decomposition can hold {\em for general bounded sequences} when $p\neq 2$. A more suitable profile decomposition when $p\neq 2$ has recently been obtained {\em for Palais-Smale sequences} in \cite{BSY}, which in principle may lead to the same kind of results we will discuss in a short while. However, a direct approach through non local Concentration-Compactness seems more convenient for ground state solutions, and is flexible enough to provide informations at higher critical levels as well. 

\vskip 2pt 
In order to state our main results, let us set
\[
|D^s u|^p(x):=\int_{\R^N} \frac{|u(x)-u(y)|^p}{|x-y|^{N+ps}}\, dy,\quad \text{for a.e. $x\in\R^N$}.
\]
For general $p>1$, $s\in \ ]0,1[$ and $N>ps$ we will prove the following.

\begin{theorem}[Ground states] 
\label{G-S} Let $\Omega$ be a bounded domain and $\{u_n\}$ be a sequence of ground state solutions to \eqref{probcrit} for $\eps_n\downarrow 0$. For each subsequence (not relabelled) such that $|u_n|^{p^*}$ weakly converges in the sense of measure, it holds
\begin{enumerate}
\item 
$u_n\to 0$ in $L^q(\R^N)$ for any $q\in [1,p^*)$;
\item
there exists $\bar x\in \overline\Omega$ such that $|D^s u_n|^p\rightharpoonup \S^{\frac{N}{ps}}\delta_{\bar x}$ and $|u_n|^{p^*}\rightharpoonup \S^{\frac{N}{ps}}\delta_{\bar x}$ weakly in the sense of measures.
\end{enumerate}
\end{theorem}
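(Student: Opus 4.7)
The plan is to combine the Nehari energy identity of ground states with a nonlocal concentration-compactness principle and an algebraic subadditivity argument. Testing $I'_{\eps_n}(u_n)=0$ against $u_n$ gives $\|u_n\|^p_{W^{s,p}_0}=\int_\Omega|u_n|^{p^*-\eps_n}\,dx$ and hence $I_{\eps_n}(u_n)=(\tfrac{1}{p}-\tfrac{1}{p^*-\eps_n})\|u_n\|^p$. The first preparatory step is the convergence $c_{\eps_n}:=I_{\eps_n}(u_n)\to \tfrac{s}{N}\,\S^{N/(ps)}$: the upper bound comes from evaluating $I_{\eps_n}$ on a cut-off Sobolev extremal $\U$ inside the mountain-pass characterization of $c_{\eps_n}$, while the lower bound follows from combining $\|u_n\|^p\geq\S\|u_n\|_{L^{p^*}}^p$ with the H\"older inequality $\int|u_n|^{p^*-\eps_n}\leq|\Omega|^{\eps_n/p^*}\|u_n\|_{L^{p^*}}^{p^*-\eps_n}$ inserted into the Nehari identity. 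This yields $\|u_n\|^p\to\S^{N/(ps)}$, $\int|u_n|^{p^*-\eps_n}\,dx\to\S^{N/(ps)}$, and also $\int|u_n|^{p^*}\,dx\to\S^{N/(ps)}$.

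Up to subsequences, $u_n\weakto u$ in $W^{s,p}_0(\Omega)$ and $u_n\to u$ in $L^q(\R^N)$ for every $q\in[1,p^*)$ by compact embedding. After further extraction, $|D^s u_n|^p\weakto\mu$ as Radon measures on $\R^N$ (tightness being ensured by the uniform decay of $|D^s u_n|^p(x)$ for $|x|$ large), while $|u_n|^{p^*}\weakto\nu$ by hypothesis. Applying the nonlocal analogue of P.-L.~Lions' second concentration lemma for $W^{s,p}_0\hookrightarrow L^{p^*}$, there exist an at most countable family $\{x_j\}_{j\in J}\subset\overline\Omega$ and positive weights $\mu_j,\nu_j$ such that
\[
\nu=|u|^{p^*}+\sum_{j\in J}\nu_j\,\delta_{x_j},\qquad \mu\geq|D^s u|^p+\sum_{j\in J}\mu_j\,\delta_{x_j},\qquad \mu_j\geq\S\,\nu_j^{p/p^*}.
\]

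The Sobolev inequality applied to $u$ together with the last decomposition gives $\S\|u\|_{L^{p^*}}^p+\S\sum_j\nu_j^{p/p^*}\leq\mu(\R^N)=\S^{N/(ps)}$, while $\|u\|_{L^{p^*}}^{p^*}+\sum_j\nu_j=\nu(\R^N)=\S^{N/(ps)}$. Writing $a_0:=\|u\|_{L^{p^*}}^{p^*}$ and $a_j:=\nu_j$, these two relations read $\sum_j a_j=\S^{N/(ps)}$ and $\sum_j a_j^{p/p^*}\leq\S^{(N-ps)/(ps)}$. Since $p/p^*<1$, the map $t\mapsto t^{p/p^*}$ is strictly subadditive on $[0,\infty)$, so
\[
\sum_j a_j^{p/p^*}\;\geq\;\Bigl(\sum_j a_j\Bigr)^{p/p^*}=\S^{(N-ps)/(ps)},
\]
with equality only when exactly one $a_j$ is nonzero. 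The alternative $a_0>0$ is excluded: it would force $u\in W^{s,p}_0(\Omega)$ to saturate the Sobolev inequality on $\R^N$, contradicting the fact that any Sobolev extremal has full support on $\R^N$ (e.g.\ by radial rearrangement). Hence $u\equiv 0$, which proves (1) via the compact embedding $W^{s,p}_0(\Omega)\hookrightarrow L^q(\Omega)$, and exactly one atom $\nu_{\bar j}=\S^{N/(ps)}$ survives at some $\bar x\in\overline\Omega$. The matching bound $\mu_{\bar j}\geq\S^{N/(ps)}$ combined with $\mu(\R^N)=\S^{N/(ps)}$ leaves no residual mass, yielding $\mu=\S^{N/(ps)}\delta_{\bar x}$ and establishing (2).

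The main obstacle is the nonlocal concentration-compactness step for $p\neq 2$: unlike in the Riesz potential case, $\|\cdot\|^p_{W^{s,p}_0}$ is not additive on disjointly supported functions, so the atomic bound $\mu_j\geq\S\,\nu_j^{p/p^*}$ requires a careful truncation/commutator analysis of the nonlocal double-integral interactions. A secondary subtlety is that $|u_n|^{p^*-\eps_n}$ and $|u_n|^{p^*}$ need not share the same weak-measure limit; however only total masses enter the subadditivity argument, and both converge to $\S^{N/(ps)}$ by the energy step.
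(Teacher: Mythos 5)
Your proposal is correct and shares the two main ingredients with the paper---the nonlocal Concentration-Compactness decomposition of Theorem~\ref{lions}, and the identification $\lim_n I_{\eps_n}(u_n)=\tfrac{s}{N}\S^{N/(ps)}$ via the Nehari identity for the lower bound and a truncated extremal plugged into the mountain-pass characterization for the upper bound---but the closing step is genuinely different from the paper's. The paper first proves Lemma~\ref{boundbelow}, a PDE lemma obtained by testing the equation with $\varphi_\delta u_n$ and controlling the resulting nonlocal commutator term via \eqref{claim}, which gives the strong atomic lower bound $\mu_j,\nu_j\geq\S^{N/(ps)}$; Theorem~\ref{G-S} then follows by combining this with the total-mass identity. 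You bypass that lemma entirely: the weaker, purely variational atomic bound $\mu_j\geq\S\nu_j^{p/p^*}$ already contained in Theorem~\ref{lions}, together with $\nu(\R^N)=\mu(\R^N)=\S^{N/(ps)}$ and the strict subadditivity of $t\mapsto t^{p/p^*}$, forces exactly one of the masses $\{\|u\|_{p^*}^{p^*},\nu_1,\nu_2,\dots\}$ to be nonzero, and the option $u\neq 0$ is ruled out because it would yield a Sobolev extremal supported in $\overline\Omega$. This is the classical Lions strict-subadditivity argument for minimizing sequences; it is somewhat leaner (no second round of equation-testing) and, notably, it only uses that $\{u_n\}$ is an energy-minimizing sequence rather than a sequence of critical points, whereas the paper's Lemma~\ref{boundbelow} genuinely needs the equation and is also what powers the higher-energy Theorem~\ref{H-E}. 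The one step you underplay is the construction of the competitor realizing the upper energy bound: for $p\neq 2$ the extremal $V$ is not explicit, so ``a cut-off Sobolev extremal'' is not a routine object---one needs the sharp decay estimates of \cite{BMS} and the truncation $v_\delta=G_\delta(V_\delta)$ of \cite{PYMS} (Lemma~\ref{immanuel}) to simultaneously control the increase in $[\,\cdot\,]_{s,p}$ and the loss in $|\cdot|_{p^*-\eps}$, and that is where the real technical work of Theorem~\ref{mp} lies.
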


\begin{theorem}[Higher states]
\label{H-E}
Let $\Omega$, $u_n$, $\eps_n$ be as above and suppose that 
\begin{equation}
\label{Bsl}
I_{\eps_n}(u_n)\to c\in \left] \frac{s}{N}\S^{\frac{N}{ps}}, 2\frac{s}{N}\S^{\frac{N}{ps}}\right[,
\end{equation}
and let, up to subsequences, $u_n\weakto u$ in $W^{s,p}_0(\Omega)$.
 Then one of the following alternatives holds:
 \begin{enumerate}
 \item
  $u_n\to u$ strongly in $W^{s,p}_0(\Omega)$ with $u$ being a positive or negative solution of the limit equation \eqref{probcrit} for $\eps=0$.
  \item
  $u=0$ and there exists $\bar x\in \overline\Omega$ such that $|D^s u_n|^p\weakto c\frac{N}{s}\delta_{\bar x}$ weakly in the sense of measures.
  \end{enumerate}
  \end{theorem}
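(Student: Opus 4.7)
\textbf{Proof plan for Theorem \ref{H-E}.} Since every $u_n$ is a critical point, the Nehari identity $\langle I_{\eps_n}'(u_n),u_n\rangle=0$ reduces the energy to
\[
I_{\eps_n}(u_n)=\Big(\frac{1}{p}-\frac{1}{p^*-\eps_n}\Big)\int_{\R^{2N}}\frac{|u_n(x)-u_n(y)|^p}{|x-y|^{N+sp}}\,dx\,dy,
\]
so $\{u_n\}$ is bounded in $W^{s,p}_0(\Omega)$ and $\int_{\R^N}|D^s u_n|^p\,dx\to Nc/s$. Up to a subsequence $u_n\weakto u$ in $W^{s,p}_0(\Omega)$, $u_n\to u$ in $L^q(\Omega)$ for every $q<p^*$ and a.e. Applying the fractional Concentration--Compactness principle gives at most countably many atoms $\{x_j\}_{j\in J}$ such that
\[
|u_n|^{p^*}\weakto |u|^{p^*}+\sum_{j\in J}\nu_j\,\delta_{x_j},\qquad |D^s u_n|^p\weakto \mu\ge |D^s u|^p+\sum_{j\in J}\mu_j\,\delta_{x_j},
\]
with $\mu_j\ge\S\,\nu_j^{p/p^*}$. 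Concentrating cut-offs at each $x_j$ and testing the equation for $u_n$ yields the matching $\mu_j=\nu_j$ at every atom, hence $\nu_j\ge\S^{N/ps}$ for every $j\in J$.

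The next step is to identify $u$ as a weak solution of the limit problem $(-\Delta_p)^s u=|u|^{p^*-2}u$ in $\Omega$. For a test function $\varphi\in C^\infty_c(\Omega\setminus\{x_j\}_{j\in J})$, Vitali's theorem (using a.e.\ convergence, the uniform $L^{p^*/(p^*-1)}$ bound of $|u_n|^{p^*-2-\eps_n}u_n$ and $\eps_n\to 0$) handles the right-hand side. For the left-hand side, the strong $L^{p^*}_{\mathrm{loc}}$ convergence of $u_n$ away from the atoms, combined with the $(S_+)$-type monotonicity of $(-\Delta_p)^s$ (cf. \cite{BSY,MS}), gives strong convergence of $u_n$ to $u$ in $W^{s,p}_0(\Omega')$ for every $\Omega'\Subset\Omega\setminus\{x_j\}$; the limit equation follows on such test functions and is extended globally by density. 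Testing the original equation with $u_n$ itself gives $\mu(\R^N)=\lim\int|u_n|^{p^*-\eps_n}\,dx=\nu(\R^N)$, so
\[
c=\frac{s}{N}\,\mu(\R^N)=\frac{s}{N}\Big(\|u\|_{L^{p^*}}^{p^*}+\sum_{j\in J}\nu_j\Big).
\]

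Now the dichotomy follows from an elementary counting. Since $u$ solves the limit equation, either $u\equiv 0$ or $\|u\|_{L^{p^*}}^{p^*}\ge\S^{N/ps}$, and each atom contributes at least $\S^{N/ps}$ to the sum. The hypothesis $c<2(s/N)\S^{N/ps}$ thus allows only one nonzero contribution. If $u\neq 0$, then $J=\emptyset$, $\|u_n\|^p\to\|u\|^p$ by Brezis--Lieb and $u_n\to u$ strongly in $W^{s,p}_0(\Omega)$; to exclude that $u$ changes sign, note that testing the limit equation against $u_\pm$ and applying the pointwise inequality
\[
|a-b|^{p-2}(a-b)(a_\pm-b_\pm)\ge|a_\pm-b_\pm|^p\qquad \forall\,a,b\in\R
\]
gives $\int|D^s u_\pm|^p\,dx\le \int u_\pm^{p^*}\,dx$, and combined with the Sobolev inequality this forces both $\|u_\pm\|_{L^{p^*}}^{p^*}\ge\S^{N/ps}$, hence $I_0(u)\ge 2(s/N)\S^{N/ps}$, contradicting the energy bound. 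If $u\equiv 0$, then $\sum_j\nu_j=Nc/s<2\S^{N/ps}$ forces $J=\{\bar x\}$ with $\nu_{\bar x}=\mu_{\bar x}=Nc/s$, and the mass identity $\mu(\R^N)=Nc/s=\mu_{\bar x}$ shows $\mu=(Nc/s)\,\delta_{\bar x}$, which is alternative (2).

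The main obstacle is the rigorous identification of $u$ as a weak solution of the limit equation when $p\neq 2$: unlike the Hilbert case, one cannot rely on weak continuity of $(-\Delta_p)^s$, and the argument must exploit the monotone structure together with the fact that all mass concentration is confined to the atoms $\{x_j\}$. A secondary subtlety is the failure of the orthogonal decomposition $\|u\|^p=\|u_+\|^p+\|u_-\|^p$ in the nonlocal setting, which we circumvent through the pointwise convexity inequality stated above.
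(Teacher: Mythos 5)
Your plan follows the paper's strategy quite closely (Nehari identity for boundedness, concentration--compactness, atom lower bounds, energy counting, sign exclusion), but two local choices differ and one of them creates a gap.

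The gap is the counting step. You write $c=\tfrac{s}{N}\bigl(\|u\|_{L^{p^*}}^{p^*}+\sum_j\nu_j\bigr)$, which implicitly requires $\mu(\R^N)=\nu(\R^N)$. You justify this by testing the equation with $u_n$: that only gives $[u_n]_{s,p}^p=\int|u_n|^{p^*-\eps_n}\,dx$, and H\"older then gives $\lim\int|u_n|^{p^*-\eps_n}\leq\nu(\R^N)$; the reverse inequality $\nu(\R^N)\leq\mu(\R^N)$ does \emph{not} follow from testing, and without a uniform $L^{p^*}\log L$-type bound need not hold a priori. The paper avoids this by counting with $\mu$ throughout: $c=\tfrac{s}{N}\mu(\R^N)\geq\tfrac{s}{N}\bigl(\int|D^su|^p+\sum_j\mu_j\bigr)$; the atoms are controlled by \eqref{masse} ($\mu_j\geq\S^{N/ps}$, obtained from the Sobolev bound $\mu_j\geq\S\nu_j^{p/p^*}$ together with $\nu_j\geq\mu_j$ from the cut-off tests), and the absolutely continuous part is bounded from below by the separate lemma preceding the proof, which uses only the subcritical equation and the measure decomposition and hence avoids any reliance on $u$ already solving the limit problem. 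Your argument can be repaired by making the same substitution $\nu\to\mu$ in the count; you already compute $c=\tfrac{s}{N}\mu(\R^N)$ from the Nehari identity, and you already know $\mu_j\geq\S^{N/ps}$.

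On the two legitimate deviations: (i) the paper removes sign-changing solutions at the $\eps_n$-level via the Energy Doubling Lemma, so each $u_n$ and hence the limit is signed, while you run the same energy-doubling computation on the \emph{limit} equation after establishing $I_0(u)=c$; both are fine, and your variant is arguably cleaner since it never needs the convergence $\S_{p^*-\eps_n}(\Omega)\to\S$. (ii) For identifying $u$ as a weak solution of the critical problem, the paper is terse (``it suffices that $f_n\weakto u^{p^*-1}$'', which for $p\neq 2$ tacitly needs a Minty--Browder/$(S_+)$ step), while you make the monotonicity argument explicit; this is the right idea, although the phrase ``strong convergence of $u_n$ to $u$ in $W^{s,p}_0(\Omega')$'' is not meaningful for a nonlocal operator and should be replaced by, e.g., strong convergence of $\zeta u_n$ in $W^{s,p}_0(\Omega)$ for cut-offs $\zeta$ supported away from the (finitely many) atoms. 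Note also that the paper first proves $M=0$ (when $u\neq 0$) using the absolutely-continuous-part lemma and only then shows the limit equation, which makes the $(S_+)$ step global and much simpler; your order is reversed and hence technically heavier.
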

  
  In the case $p=2$ and $\Omega$ smooth we can exclude that the concentration points lie on the boundary, and in some cases precisely locate them.
  
   \begin{theorem}[Inner concentration]
 \label{G-S2}
 Let $N>2s$ and $\Omega$ be a bounded $C^{1,1}$ domain. For any $\{\eps_n\}$ with $\eps_n\downarrow 0$, let $u_n$ be a ground state solution of 
 \begin{equation}
 \label{gs2}
 \begin{cases}
(-\Delta)^s u_n=u_n^{2^*-\eps_n-1},&\text{$u>0$ in $\Omega$},\\
u_n=0&\text{in $\Omega^c$}.
\end{cases}
\end{equation}
Then, up to subsequence, $|u_{n}|^{2^*}\weakto \S^{\frac{N}{2s}} \delta_{\bar x}$ weakly in the sense of measures, for  some $\bar x\in \Omega$.
\end{theorem}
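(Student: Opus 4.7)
The plan is to argue by contradiction via the fractional Pohozaev identity of Ros-Oton--Serra; this is where the restrictions $p=2$ and $\partial\Omega\in C^{1,1}$ enter essentially. Suppose, toward a contradiction, that $\bar x\in\partial\Omega$. Applied to $u_n$ with origin $x_0\in\R^N$ (to be chosen), using the equation $(-\Delta)^s u_n=u_n^{2^*-\eps_n-1}$ and the usual computation $\int(x-x_0)\cdot\nabla u_n\,u_n^{2^*-\eps_n-1}\,dx=-\frac{N}{2^*-\eps_n}\int u_n^{2^*-\eps_n}\,dx$, the identity yields
\begin{equation*}
\frac{(N-2s)\,\eps_n}{2(2^*-\eps_n)}\int_\Omega u_n^{2^*-\eps_n}\, dx \;=\; \frac{\Gamma(1+s)^2}{2}\int_{\partial\Omega}\left(\frac{u_n}{\delta^s}\right)^{\!2}(x-x_0)\cdot\nu\, d\sigma,
\end{equation*}
with $\delta(x):=\dist(x,\partial\Omega)$. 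By Theorem~\ref{G-S}, $\int_\Omega u_n^{2^*-\eps_n}\, dx\to\S^{N/(2s)}$, so the left-hand side is $O(\eps_n)$ and the boundary integral must tend to zero.

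First I would localize the boundary integral near $\bar x$. Since $\partial\Omega\in C^{1,1}$, one can pick $x_0$ on the inner normal at $\bar x$, close enough to $\bar x$, so that $(x-x_0)\cdot\nu(x)\geq c_0>0$ on a boundary neighborhood $\Gamma\subset\partial\Omega$ of $\bar x$. Combining the Ros-Oton--Serra $L^\infty$ bound on $u_n/\delta^s$ in a one-sided tube along $\partial\Omega\setminus\Gamma$ (in terms of $\|u_n\|_{L^\infty}$ near that tube) with the uniform decay of $u_n$ away from $\bar x$ furnished by part~(2) of Theorem~\ref{G-S}, the portion of the boundary integral over $\partial\Omega\setminus\Gamma$ is negligible, and the identity reduces to
\[
\int_\Gamma \left(\frac{u_n}{\delta^s}\right)^{\!2}d\sigma = O(\eps_n).
\]

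To contradict this, I would blow up around points of near-maximum $x_n\to\bar x$ of $u_n$: choose $\mu_n\to 0$ so that $v_n(y):=\mu_n^{(N-2s)/2}u_n(x_n+\mu_n y)$ satisfies $v_n(0)\sim 1$, so $v_n$ solves a near-critical equation on $\Omega_n:=(\Omega-x_n)/\mu_n$ with zero exterior data. Setting $d_n:=\dist(x_n,\partial\Omega)/\mu_n$, two scenarios arise. If $\{d_n\}$ stays bounded, $\Omega_n$ converges (after rotation) to a half-space and, by $C^\alpha_{\mathrm{loc}}$ regularity, $v_n$ converges locally to a nonnegative bounded $V$ solving $(-\Delta)^s V=V^{2^*-1}$ in that half-space with $V\equiv 0$ outside; a fractional Liouville-type theorem then forces $V\equiv 0$, contradicting $v_n(0)\sim 1$. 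If instead $d_n\to\infty$, $v_n$ converges locally to the fractional Aubin--Talenti bubble $U$ in $\R^N$; the far-field expansion $u_n(x)\sim c\,\mu_n^{(N-2s)/2}|x-x_n|^{-(N-2s)}$ (up to a Dirichlet corrector making $u_n$ vanish outside $\Omega$), combined with the boundary expansion $u_n(x)=(u_n/\delta^s)(x_*)\,\delta(x)^s+o(\delta(x)^s)$ for $x_*\in\Gamma$, yields a pointwise lower bound on $u_n/\delta^s|_\Gamma$; integrating and balancing the scales $\eps_n$, $\mu_n$, $\dist(x_n,\partial\Omega)$ via the variational characterization of the ground-state energy produces a boundary integral strictly larger in order than $\eps_n$, the required contradiction.

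The main obstacle is precisely this last balancing step: in the second subcase one must quantify how the bulk bubble profile transfers through the nonlocal Dirichlet corrector to the boundary trace $u_n/\delta^s$, and then match the three vanishing scales $\eps_n$, $\mu_n$, $\dist(x_n,\partial\Omega)$ tightly enough to overshoot the Pohozaev bound. The corrector is harder to estimate in the nonlocal setting than in the local one, but the requisite bounds ultimately rest on the Ros-Oton--Serra boundary regularity combined with the explicit decay of $U$.
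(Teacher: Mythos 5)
Your approach is genuinely different from the paper's, and it contains a gap that you have correctly identified yourself but not resolved.

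The paper does not use the Pohozaev identity at all. Instead it proves a Harnack-type inequality on normal segments (Theorem~\ref{thmstima}): after performing a Kelvin transform through a ball tangent from outside at a boundary point $x_0$, the moving-plane method of \cite{BMSc} applies and yields that $\xi\mapsto|1+\xi|^{N-2s}u(x_0-\xi\nu)$ is increasing on $[0,2d]$, hence $\sup_{[0,d]}u \le C\,u(x_0-\xi\nu)$ for $\xi\in[d,2d]$. Parameterizing a tubular neighborhood of $\partial\Omega$ by $\Phi(z,\xi)=z-\xi\nu_z$ and integrating $|u_n|^{2^*}$ over the two layers $N_d$ and $N_{2d}\setminus N_d$, one gets $\int_{N_d}|u_n|^{2^*}\le C(\Omega)\int_{N_{2d}\setminus N_d}|u_n|^{2^*}$ with a purely geometric constant. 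If $\bar x\in\partial\Omega$, then by Theorem~\ref{G-S} the left integral stays bounded away from zero while the right one vanishes — contradiction. No asymptotic expansion, blow-up, or rate matching is needed; the argument is entirely qualitative.

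Your Pohozaev setup is algebraically correct: for a solution of \eqref{gs2}, the coefficient on the left is indeed $\tfrac{(N-2s)\eps_n}{2(2^*-\eps_n)}$. But two issues remain. First, the step where you discard the contribution of $\partial\Omega\setminus\Gamma$ requires uniform \emph{pointwise} smallness of $u_n/\delta^s$ away from $\bar x$, which is not given by the measure-theoretic concentration of Theorem~\ref{G-S}; you would need uniform local $L^\infty$ estimates on $u_n$ near $\partial\Omega\setminus\Gamma$ plus uniform boundary regularity — plausible but a nontrivial addition, and also needed because $(x-x_0)\cdot\nu$ changes sign on a non-star-shaped $C^{1,1}$ domain. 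Second, and more seriously, the balancing step in the regime $d_n\to\infty$ is a genuine gap. To overshoot the $O(\eps_n)$ bound you must relate $\eps_n$, $\mu_n$, and $\mathrm{dist}(x_n,\partial\Omega)$, i.e.\ establish quantitative rates for the concentration.\ This is precisely the delicate machinery (\`a la Han/Rey in the local case) that the paper's moving-plane argument circumvents, and in the nonlocal setting the Dirichlet corrector between the bulk bubble and the boundary trace $u_n/\delta^s$ is substantially harder to control.\ You have named the obstacle but not shown it can be overcome, so as it stands the proposal does not complete the proof.
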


\begin{theorem}[Location for annuli]
\label{anello}
Let $r_2>r_1>0$, $\Omega=B_{r_2}\setminus B_{r_1}$ and $N>2s$. Then the ground state 
solutions of \eqref{gs2} concentrate at  points $\bar x$ with $|\bar x|$ the harmonic mean of $r_2$ and $r_1$.
\end{theorem}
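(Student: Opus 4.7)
The plan is to combine the fractional Pohozaev identity of Ros--Oton--Serra with the Kelvin conformal symmetry of the annulus. By Theorem~\ref{G-S2}, the concentration point $\bar x$ lies in the interior of $\Omega = B_{r_2}\setminus\overline{B_{r_1}}$. As $\Omega$ is $O(N)$-invariant, only $R^\ast := |\bar x|$ is intrinsic; after a rotation we may fix $\bar x = R^\ast e_1$ with $R^\ast \in (r_1, r_2)$, and the goal is to show $R^\ast = H := 2r_1 r_2/(r_1+r_2)$.

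First I would apply the fractional Pohozaev identity to $u_n$ with the constant vector field $X \equiv e_1$. The nonlinearity contribution is a pure divergence and integrates to zero since $u_n \equiv 0$ outside $\Omega$, leaving
\[
\int_{\partial\Omega} (e_1 \cdot \nu_\Omega) \left(\frac{u_n}{d^s}\right)^{2} d\sigma = 0,
\]
where $d(x) = \mathrm{dist}(x, \partial\Omega)$. With $\nu_\Omega = x/r_2$ on $\partial B_{r_2}$ and $\nu_\Omega = -x/r_1$ on $\partial B_{r_1}$, this becomes
\[
\frac{1}{r_2}\int_{\partial B_{r_2}} x_1 \left(\frac{u_n}{d^s}\right)^{2} d\sigma = \frac{1}{r_1}\int_{\partial B_{r_1}} x_1 \left(\frac{u_n}{d^s}\right)^{2} d\sigma.
\]
By Theorem~\ref{G-S}, $u_n$ concentrates at $\bar x$ as a rescaled Aubin--Talenti bubble whose far field is governed by the Dirichlet Green's function $G_\Omega(\cdot, \bar x)$ of $(-\Delta)^s$ on $\Omega$; correspondingly, after the same rescaling $u_n/d^s$ converges on $\partial\Omega$ to a positive multiple of the Martin kernel $M_\Omega(\bar x, \cdot) = G_\Omega(\bar x, \cdot)/d^s$. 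Taking the limit yields
\[
\frac{1}{r_2}\int_{\partial B_{r_2}} x_1 M_\Omega(\bar x, x)^2 \, d\sigma = \frac{1}{r_1}\int_{\partial B_{r_1}} x_1 M_\Omega(\bar x, x)^2 \, d\sigma.
\]

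To extract the value of $R^\ast$, I would invoke the Kelvin inversion $T(x) = r_1 r_2 x/|x|^2$, an involution preserving $\Omega$ and interchanging its two boundary spheres. Conformal covariance of $(-\Delta)^s$ gives
\[
G_\Omega(T(x), T(y)) = (r_1 r_2)^{N-2s}(|x||y|)^{-(N-2s)} G_\Omega(x, y),
\]
which, together with $d(T(x)) \sim (r_1/r_2) d(x)$ near $\partial B_{r_2}$ and $d(T(x)) \sim (r_2/r_1) d(x)$ near $\partial B_{r_1}$, produces a transformation rule for $M_\Omega$ on both boundaries. Pushing one of the two integrals in the identity above through $T$ converts it into an integral on the opposite sphere, weighted by powers of $|\bar x|$ and $|T(\bar x)| = r_1 r_2/|\bar x|$. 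The $SO(N-1)$-symmetry about the $e_1$-axis then collapses the combined identity to a single scalar equation for $R^\ast$, whose unique root in $(r_1, r_2)$ is $R^\ast = H$.

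The hard part will be this last reduction: the Kelvin push-forward introduces \emph{asymmetric} conformal factors at the two boundaries (scaling like $r_1/r_2$ versus $r_2/r_1$), and it is precisely the interplay of these factors with the $(N-2s)$-homogeneity of the Martin kernel under inversion that selects the harmonic mean rather than the Kelvin-symmetric candidate $\sqrt{r_1 r_2}$. A subsidiary technical point is the justification of the boundary convergence $u_n/d^s \to c\, M_\Omega(\bar x, \cdot)$ uniformly on $\partial \Omega$, which will rely on the boundary regularity theory cited in the introduction together with the Green's function description of the far-field profile.
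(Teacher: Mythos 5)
Your proposal takes a route entirely different from the paper's. The paper never invokes a Pohozaev identity or Green's function asymptotics: it performs Kelvin transforms through externally tangent \emph{unit} balls (one inside $B_{r_1}$, one outside $B_{r_2}$), then applies the moving plane method of \cite{BMSc} to the Kelvin-transformed image, which is contained in a ball. This yields explicit radial monotonicity of $t\mapsto t^{N-2s}u(tx_0)$ on $[1,M]$ and of $t\mapsto(t+1)^{N-2s}u((R-t)x_0)$ on $[0,M]$ (after normalizing to $\Omega=B_R\setminus B_1$, $M=2R/(R+1)$). The two "optimal caps" meet precisely at $|x|=M$, and translating back through the scaling gives the harmonic mean. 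The argument is purely comparison-theoretic and needs only the single-point concentration from Theorem~\ref{G-S}, not any refined description of the limit profile.

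Your plan has three genuine gaps. First, the translation Pohozaev identity you write for $(-\Delta)^s$ is asserted, not established; the Ros--Oton--Serra identity that is actually available is for the dilation field $x\cdot\nabla$, and an $e_1$-translation version would need its own justification in the fractional setting. Second, the boundary convergence $u_n/d^s\to cM_\Omega(\bar x,\cdot)$ is not a "subsidiary technical point": since $u_n\to 0$ uniformly away from $\bar x$, you must first normalize $u_n$ appropriately and then prove that the normalized sequence converges to the Green's function uniformly up to $\partial\Omega$, with a $d^s$-Hopf lemma to control the quotient on both boundary spheres simultaneously. This is itself a substantial (and, to our knowledge, unwritten) piece of nonlocal boundary asymptotics. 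Third, the decisive computation — why the Kelvin conformal weights select $H=2r_1r_2/(r_1+r_2)$ rather than the Kelvin-fixed radius $\sqrt{r_1r_2}$ — is not carried out; you state only that the asymmetric factors "interplay" to give the answer, which is precisely the step that must be checked. Indeed, a naive symmetry argument via the inversion $T(x)=r_1r_2\,x/|x|^2$ would suggest the geometric mean, so the proposed mechanism is open to doubt until the reduction to a scalar equation is actually done. The paper's route avoids all of these complications by working directly with $u$ rather than with its boundary traces.
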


\noindent
Let us point out the main features of the previous theorems. In order to prove Theorem \ref{G-S}, we will derive a Concentration-Compactness alternative for nonlocal problems. In order to do so we proceed directly, controlling through a domain decomposition and Lemma \ref{lemmavanish} below all the nonlocal interactions which, in principle, could contribute to the limiting measures. In a similar way we prove a bound from below on the singular part arising from concentration. One fundamental difficulty in the case $p\neq 2$ is that entire ground state solutions of the limiting problem are unknown explicitly. Thus,  we will have to use an auxiliary function recently constructed in \cite{PYMS} to prove that the energy of the ground states converges to the minimal one. To prove Theorem \ref{H-E} we will bound from below the absolutely continuous part of the limiting measures, again through the
crucial Lemma \ref{lemmavanish}. 
\vskip2pt
\noindent
For Theorems~\ref{G-S2} and \ref{anello} we will employ the Moving Plane method which has recently been proved in \cite{BMSc} in the nonlocal case, together with suitable fractional Kelvin transforms. Our main lemma here is a Harnack inequality on segments for positive solutions $u$ of a large class of semilinear equations. The inequality is of the form
\[
\sup_{[a, b]}u\leq Cu(b)
\]
where $[a, b]\subseteq \Omega$ is a segment of suitable fixed length, normal to $\partial\Omega$ at the point $a$, with $C$ being a geometric constant depending only on the domain. For a precise statement see Theorem~\ref{thmstima}. This forces the concentration to happen at least a distance $|b-a|$ away from the boundary. The construction can be performed explicitly for annular domains, yielding Theorem \ref{anello}.

\vskip6pt
\noindent
{\em Structure of the paper}.\
In Section 2.1 we fix some notations. 
In Section 2.2, after recalling some well known facts on the space $W^{s,p}_0(\Omega)$ and the ground states 
solutions of \eqref{probcrit}, we will prove some general results on the weak solutions of \eqref{probcrit} for fixed $\eps>0$. 
Section 2.3 is dedicated to the nonlocal Concentration-Compactness lemma Theorem~\ref{lions}. 
Then, in Section 3, we will prove Theorems \ref{G-S} and \ref{H-E}, and in Section 4 Theorem~\ref{G-S2} and \ref{anello}.

\section{Preliminaries}
\subsection{Notation}
For $s\in (0, 1)$, $p>1$ and $N> ps$, we let $p^*=Np/(N-ps)$.
We denote by $\omega_N$ the measure of the $N-$dimensional ball having unit radius.
For $E\subseteq \mathbb{R}^N$ measurable we denote by $|E|$ its $N-$dimensional Lebesgue measure, by $E^c=\mathbb{R}^N\setminus E$ its complement and by $\chi_E$ its characteristic function. If $u:E\to \mathbb{R}$ is measurable we set
\[
[u]_{W^{s, p}(E)}^p:=\int_{E\times E}\frac{|u(x)-u(y)|^p}{|x-y|^{N+ps}}\, dx\, dy, \qquad [u]_{s,p}:=[u]_{W^{s,p}(\mathbb{R}^N)}, 
\]
and for any $q\geq 1$
\[
|u|_{L^q(E)}:=\left(\int_{E}|u|^q\, dx\right)^{1/q},\qquad |u|_q:=|u|_{L^q(\mathbb{R}^N)}.
\]
Finally, for $t\in \mathbb{R}$ and $p>1$, we will use the notation
\[
t^{p-1}:=|t|^{p-2}t.
\]
For any $u\in W_0^{s,p}(\Omega)$  and any Lebesgue point $x$ for $u$, we set
\[
|D^s u|^p(x)=\int_{\R^N} \frac{|u(x)-u(y)|^p}{|x-y|^{N+ps}}\, dy.
\]
Notice that the following approximate Leibniz  formula holds true
\begin{equation}
\label{lieb}
\forall \theta>0 \ \exists C_\theta \text{ with}
\quad \int_{\R^N} |D^s (uv)|^p\, dx\leq(1+\theta)\int_{\R^N} |D^s u|^p|v|^p\, dx+C_\theta\int_{\R^N} |D^sv|^p|u|^p\, dx,
\end{equation}
which follows from the elementary inequality $|a+b|^p\leq (1+\theta)|a|^p+C_\theta|b|^p$.

\subsection{Functional Analytic framework}
We let, for any $\Omega\subseteq \R^N$,
\[
W^{s,p}_0(\Omega):=\Big\{u\in L^{p^*}(\R^N): \text{$u=0$ in $\Omega^c$},\,\, [u]_{s,p}<+\infty\Big\},
\]
which is a Banach space with respect to the norm $[\,\cdot\, ]_{s,p}$.\ This space is often denoted by $D^{s,p}_0(\Omega)$
in the current literature and it is consistent with the one defined in 
the introduction as soon as $\Omega$ is bounded. For $N>sp$, the Sobolev inequality reads as
\[
\S:=\inf\left\{\frac{[u]_{s,p}^p}{|u|_{p^*}^p}: u\in W^{s,p}_0(\Omega)\setminus\{0\}\right\}.
\]
We recall the following
\begin{proposition}[Hardy's inequality]
Let $N>sp$. Then there exists $C=C(N, p, s)>0$ such that 
\[
\int_{\mathbb{R}^N}\frac{|u|^p}{|x|^{sp}}\, dx\leq C[u]_{s, p}^p,\qquad \mbox{ for every } u\in W^{s,p}_0(\mathbb{R}^N).
\]
\end{proposition}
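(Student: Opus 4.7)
The plan is to prove the inequality through a ground-state substitution combined with the nonlocal Picone inequality for the fractional $p$-Laplacian. By density of $C_c^\infty(\R^N)$ in $W^{s,p}_0(\R^N)$ and the elementary bound $[|u|]_{s,p}\leq[u]_{s,p}$, it suffices to establish the estimate for nonnegative test functions. A further cutoff step lets us restrict to $u\in C_c^\infty(\R^N\setminus\{0\})$: since singletons have zero $W^{s,p}$-capacity when $N>sp$, such functions form a dense subclass, and the left-hand side of the desired inequality passes to the limit by monotone convergence.

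The central object is the model function $w(x):=|x|^{-\alpha}$ with $\alpha:=(N-sp)/p>0$; the positivity of $\alpha$ is precisely the hypothesis $N>sp$. Using the scaling rule
\[
(-\Delta_p)^s[w(\lambda\,\cdot\,)](x)=\lambda^{sp}\bigl((-\Delta_p)^s w\bigr)(\lambda x),
\]
together with the homogeneity $w(\lambda x)=\lambda^{-\alpha}w(x)$ and the $(p-1)$-homogeneity of the operator, one deduces that $(-\Delta_p)^s w$ is a radial function homogeneous of degree $-\alpha(p-1)-sp$, so for a dimensional constant $\Lambda=\Lambda(N,s,p)$,
\[
(-\Delta_p)^s w(x)=\Lambda\,\frac{w(x)^{p-1}}{|x|^{sp}}\qquad\text{pointwise on }\R^N\setminus\{0\}.
\]
The constant $\Lambda$ can be represented, after passing to polar coordinates, as a convergent one-dimensional principal-value integral.

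With $w$ in hand, invoke the fractional Picone inequality: for every $u\geq 0$ and $w>0$ one has the pointwise bound
\[
|w(x)-w(y)|^{p-2}(w(x)-w(y))\left(\frac{u(x)^p}{w(x)^{p-1}}-\frac{u(y)^p}{w(y)^{p-1}}\right)\leq |u(x)-u(y)|^p.
\]
Since $u\in C_c^\infty(\R^N\setminus\{0\})$, the function $\varphi:=u^p/w^{p-1}$ is a legitimate test function for the equation satisfied by $w$, and pairing the equation against $\varphi$ produces
\[
\Lambda\int_{\R^N}\frac{u(x)^p}{|x|^{sp}}\,dx=\int_{\R^{2N}}\frac{|w(x)-w(y)|^{p-2}(w(x)-w(y))(\varphi(x)-\varphi(y))}{|x-y|^{N+sp}}\,dx\,dy\leq [u]_{s,p}^p,
\]
which is the desired inequality with $C=1/\Lambda$.

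The principal obstacle is the rigorous computation of $\Lambda$ and the verification of its strict positivity at the specific endpoint exponent $\alpha=(N-sp)/p$. For $p=2$ this reduces to Herbst's classical closed-form evaluation, but for general $p$ the constant is not explicit and one must carefully manipulate the singular integral for $w$ using a dyadic splitting away from the origin to show $\Lambda>0$. Secondary technicalities, namely the density step and the admissibility of $\varphi=u^p/w^{p-1}$ as a dual pairing, are essentially routine once $u$ is supported away from $\{0\}$.
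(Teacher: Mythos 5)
The paper does not supply a proof of this proposition; it is recalled as a standard fact. Your ground-state substitution via the nonlocal Picone inequality is nonetheless a legitimate, self-contained route, and is essentially how the sharp constant is obtained in the literature (Frank--Seiringer). The steps are sound: reduce to nonnegative $u\in C^\infty_c(\R^N\setminus\{0\})$ via capacity of the point, show $w(x)=|x|^{-(N-sp)/p}$ satisfies $(-\Delta_p)^s w=\Lambda\,w^{p-1}/|x|^{sp}$ pointwise away from the origin with $\Lambda>0$, test against $\varphi=u^p/w^{p-1}$, and invoke the discrete Picone inequality of Brasco--Franzina \cite{BF}. You correctly identify the positivity of $\Lambda$ as the crux: for $p=2$ it follows from the Gamma-function identity, but for general $p$ the constant has no closed form and its positivity at $\alpha=(N-sp)/p$ is exactly the technical content of Frank--Seiringer's one-dimensional reduction of the singular integral. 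One further point you dismiss as routine but should be made explicit: since $w\notin W^{s,p}(\R^N)$, the identification of the pointwise pairing $\int ((-\Delta_p)^s w)\,\varphi\,dx$ with the bilinear form $\iint |w(x)-w(y)|^{p-2}(w(x)-w(y))(\varphi(x)-\varphi(y))\,|x-y|^{-N-sp}\,dx\,dy$ requires checking absolute convergence of the double integral for $\varphi$ supported away from $0$; this does hold because $\alpha(p-1)<N$ tames the singularity of $w$ at the origin and the decay of $w$ at infinity handles the far field, but it is not an immediate consequence of self-adjointness and deserves a line of justification.
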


\noindent
We will also let, for $1\leq q\leq p^*$,
\begin{equation}
\label{infq}
\S_q(\Omega):=\inf\left\{\frac{[u]_{s,p}^p}{|u|_{q}^p}: u\in W^{s,p}_0(\Omega)\setminus\{0\}\right\},
\end{equation}
so that $\S=\S_{p^*}(\Omega)$.\ It is a classical fact that $\S_{p^*}(\Omega)=\S_{p^*}(\R^N)$ for every 
domain $\Omega$ and that the minimization problem \eqref{infq} for $q=p^*$ admits no solution, unless $\Omega=\R^N$. For $1< q<p^*$ and  $\Omega$ bounded, H\"older's inequality ensures that $\S_q(\Omega)>0$; moreover, any minimizer $u$  for the minimization problem \eqref{infq} (which, actually, exists due to the compactness of $W^{s,p}_0(\Omega)\hookrightarrow L^q(\Omega) $) is, up to a multiplicative constant, a weak solution of the problem
\begin{equation}
\label{probq}
\begin{cases}
(-\Delta_p)^s u=|u|^{q-2}u &\text{in $\Omega$},\\
u= 0 &\text{in $\Omega^c$}.
\end{cases}
\end{equation}
 Clearly any such solution is also a critical point for the functional 
\[
J_q(u):=\frac{1}{p}[u]_{s,p}^p-\frac{1}{q}|u|_q^q
\]
where  $J_q\in C^1(W^{s,p}_0(\Omega))$. If
\[
{\cal N}_q(\Omega):=\{ u\in W^{s,p}_0(\Omega)\setminus \{0\}:\langle dJ_q(u), u\rangle=0\}
\]
is the corresponding Nehari manifold for $J_q$, one readily checks that 
\begin{equation}
\label{gs}
c_q=\inf\big\{J_q(u):u\in {\cal N}_q(\Omega)\big\}=\left(\frac{1}{p}-\frac{1}{q}\right)\S_q(\Omega)^{\frac{q}{q-p}},
\end{equation}
and minimizers for the latter problem are (up to multiplicative constants) minimizers for \eqref{infq}. Any minimizer for \eqref{gs} is called a {\em ground state} solution for \eqref{probq}, being a critical point of minimal energy. Following \cite[Theorem 4.2]{Willem} we have also the following 
Mountain Pass characterization of the values $c_q$, valid for every $q>p$
\[
c_q=\inf_{\substack{u\in W^{s,p}_0(\Omega)\\ u\neq 0}}\max_{t\geq 0} J_q(tu)=\inf_{\gamma\in \Gamma}\max_{t\in [0,1]}J_q(\gamma(t)),
\]
where
\[
\Gamma:=\big\{\gamma\in C^0\big([0,1], W^{s,p}_0(\Omega)\big):\gamma(0)=0, J_q\big(\gamma(1)\big)<0\big\}.
\]
Any ground state solution must be of constant sign, since it also solves problem \eqref{infq}, and 
\[
[|u|]_{s,p}<[u]_{s,p}\qquad \text{if $u_+$, $u_-\neq 0$}.
\]
A more precise statement is given in the following proposition (see also \cite[Lemma 2.13]{BSY})
\begin{lemma}[Energy doubling]
\label{sign}
	Let $q\geq p$ and $u\in W^{s,p}_0(\Omega)$ be a sign-changing weak solution to
	\begin{equation}
		\label{2.10}
		\begin{cases}
			(-\Delta_p)^s  u  = |u|^{q-2}u & \text{in $\Omega$}  \\
			u=0 & \text{in $\R^N\setminus \Omega$},
		\end{cases}
	\end{equation}
	where $\Omega$ is an arbitrary open subset of $\R^N$.\ Then
	$$
	J_q(u)> 2\left(\frac{1}{p}-\frac{1}{q}\right)\S_q(\Omega)^{\frac{q}{q-p}}.
	$$
	\end{lemma}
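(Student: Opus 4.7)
The plan is to exploit the positive and negative parts of $u$ as test functions, together with a strict pointwise algebraic inequality that captures the nonlocal interactions between $\{u>0\}$ and $\{u<0\}$. Since both $u_+$ and $u_-$ belong to $W^{s,p}_0(\Omega)$, they are admissible in the weak formulation of \eqref{2.10}; in what follows I tacitly assume $q>p$ so that the right-hand side of the claim is well-defined.

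First I would verify the elementary inequality
\[
|a-b|^{p-2}(a-b)(a_+-b_+) \geq |a_+-b_+|^p, \qquad a,b\in\R,
\]
which a case analysis on the signs of $a$ and $b$ reduces to: equality when $ab\geq 0$, and strict inequality whenever $ab<0$. Applying it pointwise to $(a,b)=(u(x),u(y))$ and integrating yields
\[
\int_{\R^{2N}}\frac{|u(x)-u(y)|^{p-2}(u(x)-u(y))(u_+(x)-u_+(y))}{|x-y|^{N+ps}}\,dx\,dy > [u_+]_{s,p}^p,
\]
strictness being guaranteed by the sign change of $u$, which forces $\{(x,y):u(x)u(y)<0\}$ to have positive measure in $\R^{2N}$. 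Testing \eqref{2.10} against $u_+$ shows that the left-hand side equals $|u_+|_q^q$, so $|u_+|_q^q > [u_+]_{s,p}^p$.

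Combining this with the Sobolev bound $[u_+]_{s,p}^p \geq \S_q(\Omega)|u_+|_q^p$ gives $|u_+|_q^{q-p}>\S_q(\Omega)$, hence $|u_+|_q^q > \S_q(\Omega)^{q/(q-p)}$, and symmetrically $|u_-|_q^q > \S_q(\Omega)^{q/(q-p)}$. Since $u$ itself is a critical point of $J_q$, testing \eqref{2.10} against $u$ produces the Nehari identity $[u]_{s,p}^p=|u|_q^q=|u_+|_q^q+|u_-|_q^q$, and consequently
\[
J_q(u)=\left(\frac{1}{p}-\frac{1}{q}\right)[u]_{s,p}^p > 2\left(\frac{1}{p}-\frac{1}{q}\right)\S_q(\Omega)^{\frac{q}{q-p}},
\]
as required.

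I expect the main technical point to be the strict pointwise algebraic inequality and its promotion to a strict integral one, since this is precisely what compensates for the failure of $[u]_{s,p}^p$ to split as $[u_+]_{s,p}^p+[u_-]_{s,p}^p$ in the nonlocal setting: the cross-interactions between $\{u>0\}$ and $\{u<0\}$ generate the surplus responsible for the gap above $2c_q$. The remaining ingredients—admissibility of $u_\pm$ as test functions, the Sobolev inequality, and the Nehari identity for $u$—are routine.
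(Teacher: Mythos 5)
Your proof is correct and takes essentially the same route as the paper: the key pointwise inequality $|a-b|^{p-2}(a-b)(a_\pm - b_\pm)\geq |a_\pm - b_\pm|^p$, testing against $\pm u_\pm$ to get the strict inequality $[u_\pm]_{s,p}^p < |u_\pm|_q^q$, and then summing. The only (cosmetic) difference is in the closing step: the paper rescales $u_\pm$ onto the Nehari manifold via $\lambda_\pm<1$ and compares to $c_q$, whereas you plug the Sobolev bound $[u_\pm]_{s,p}^p\geq \S_q(\Omega)|u_\pm|_q^p$ directly into the strict inequality to read off $|u_\pm|_q^q>\S_q(\Omega)^{q/(q-p)}$; both give the same conclusion.
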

\begin{proof}
	If $u_\pm:=\max\{\pm u,0\}\in W^{s,p}_0(\Omega)\setminus \{0\}$, then 
	\begin{equation*}
			\pm (u(x)-u(y))^{p-1}(u_\pm(x)-u_\pm(y))\geq |u_\pm(x)-u_\pm(y)|^p, 
	\end{equation*}
for a.e.\ $x, y\in \R^N$. Indeed, for the inequality involving the positive part, we have
	\[
	\begin{split}
	&(u(x)-u(y))^{p-1}(u_+(x)-u_+(y))=|u_+(x)-u_+(y)|^p\chi_{\{u(x)\geq 0, u(y)\geq 0\}}\\
	&+(u_+(x)+u_-(y))^{p-1}u_+(x)\chi_{\{u(x)>0, u(y)<0\}}+(u_-(x) +u_+(y))^{p-1}u_+(y)\chi_{\{u(x)<0, u(y)>0\}}\\
	&\geq |u_+(x)-u_+(y)|^p.
	\end{split}
	\]
	%since $t\mapsto t^{p-1}$ is odd and strictly increasing. 
	A similar justification holds for the inequality involving the negative part.\ This also shows that 
	the above inequalities are strict as long as $u_\pm\neq 0$.
	Then, testing problem \eqref{2.10} by  $\pm u_\pm$ yields
	\begin{equation*}
		[u_\pm]_{s,p}^p \leq \int_{\R^{2N}}\frac{\pm (u(x)-u(y))^{p-1}(u_\pm(x)-u_\pm(y))}{|x-y|^{N+sp}}\, dx\, dy=\int_{\Omega}u_\pm^q\, dx, 
	\end{equation*}
	with strict inequality if $u_\pm\neq 0$. Now, letting
	\[
	\lambda_\pm:=\left(\frac{[u_\pm]_{s,p}^p}{|u_\pm|_q^q}\right)^{\frac{1}{q-p}},
	\]
	it holds $\lambda_\pm u_\pm\in {\cal N}_q(\Omega)$ and $\lambda_\pm <1.$ On the other hand
	\[
	\begin{split}
	J_q(u)&=\left(\frac{1}{p}-\frac{1}{q}\right)|u|^{q}_{q}=\left(\frac{1}{p}-\frac{1}{q}\right)\int_{\Omega}(u_+^{q}+u_-^{q})dx=
	\frac{1}{\lambda^q_+}J_q(\lambda_+u_+)+\frac{1}{\lambda_-^q}J_q(\lambda_-u_-)\\	
	&> 2\left(\frac{1}{p}-\frac{1}{q}\right)\S_q(\Omega)^{\frac{q}{q-p}},
	\end{split}
	\]
	which completes the proof.
\end{proof}

 We will choose in the following the nonnegative ground states, which actually turn out to be strictly positive in $\Omega$ by the following result. 
 
\begin{lemma}[Strong Maximum Principle]
Let $u\in W^{s,p}_0(\Omega)$ satisfy 
\begin{equation}
\label{crit}
\begin{cases}
(-\Delta_p)^su\geq 0 &\text{weakly in $\Omega$},\\
u\geq 0&\text{in $\Omega^c$}.
\end{cases}
\end{equation}
Then $u$ has a lower semi-continuous representative in $\Omega$, which is either identically $0$ or positive.
\end{lemma}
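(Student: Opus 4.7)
The plan is to combine a weak maximum principle with the regularity and weak Harnack theory for nonnegative weak supersolutions of $(-\Delta_p)^s$ available in the nonlocal literature (e.g.\ \cite{KMS,BPa}).

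First I would upgrade the boundary sign condition to a global one. The function $u_-:=\max\{-u,0\}$ lies in $W^{s,p}_0(\Omega)$ since $u\geq 0$ in $\Omega^c$, hence is an admissible test function. Testing the inequality $(-\Delta_p)^s u\geq 0$ against $-u_-$ and using the elementary pointwise bound
\[
-(u(x)-u(y))^{p-1}\bigl(u_-(x)-u_-(y)\bigr)\geq |u_-(x)-u_-(y)|^p,
\]
whose proof is formally identical to the one appearing in Lemma~\ref{sign}, one gets $[u_-]_{s,p}^p\leq 0$, and hence $u\geq 0$ a.e.\ on $\R^N$. At this point $u$ is a nonnegative weak supersolution of $(-\Delta_p)^s$ in $\Omega$, so the available local boundedness and lower semi-continuity results for such functions furnish a lower semi-continuous representative $u^*$ in $\Omega$; we continue to denote it by $u$.

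It remains to establish the dichotomy, which I would obtain by a clopen argument on the zero set $Z:=\{x\in\Omega:u(x)=0\}$. Closedness of $Z$ in $\Omega$ is immediate from the lower semi-continuity of $u$. For openness, fix $x_0\in Z$ and a ball $B_R(x_0)\Subset\Omega$, and invoke the nonlocal weak Harnack inequality for nonnegative supersolutions in the form
\[
\left(\dashint_{B_{R/2}(x_0)} u^t\,dx\right)^{1/t}\leq C\inf_{B_{R/2}(x_0)} u+C R^{\frac{sp}{p-1}}\,\mathrm{Tail}(u_-;x_0,R),
\]
valid for some exponent $t=t(N,s,p)>0$. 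Since $u_-\equiv 0$ on $\R^N$ the tail vanishes, and since by the very definition of the lsc representative $u(x_0)=0$ implies $\inf_{B_{R/2}(x_0)} u=0$, the right-hand side is zero; hence $u\equiv 0$ a.e., and in fact pointwise, on $B_{R/2}(x_0)$. This proves $B_{R/2}(x_0)\subset Z$, and connectedness of $\Omega$ forces $Z=\emptyset$ or $Z=\Omega$.

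The principal difficulty is really the nonlocal regularity input exploited above: the existence of a lower semi-continuous representative and, more substantially, the nonlocal weak Harnack inequality for $p\neq 2$, both considerably harder than in the semi-linear case. Here they are invoked as black boxes from the literature; the remainder of the argument, namely the sign upgrade via the algebraic inequality of Lemma~\ref{sign} and the clopen dichotomy, is then essentially formal.
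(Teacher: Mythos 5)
Your argument correctly upgrades the sign condition (the test-function step is the standard one, equivalent to the comparison principle the paper cites), correctly derives the lower semi-continuous representative and the closedness of the zero set, and correctly uses the weak Harnack inequality to show the zero set is open. Through these steps you recover that on each \emph{connected component} of $\Omega$ the function $u$ is either identically zero or everywhere positive.

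The gap is in the very last line: ``connectedness of $\Omega$ forces $Z=\emptyset$ or $Z=\Omega$.'' The lemma never assumes $\Omega$ is connected, and the remark immediately following it in the paper emphasizes that this is precisely the point: ``contrary to the local case $s=1$, connectedness of $\Omega$ is not required.\ This is a typical feature of nonlocal problems.'' So your clopen argument leaves open the case where $u>0$ on one component $\Omega_1$ and $u\equiv 0$ on another component $\Omega_2$. The paper closes this by a genuinely nonlocal argument: testing \eqref{crit} with a nonnegative $\varphi\in C^\infty_c(\Omega_2)$, $\varphi\not\equiv 0$, and using $u\geq 0$ with $u\equiv 0$ on $\Omega_2$, one finds
\[
0\leq \int_{\R^{2N}} \frac{(u(x)-u(y))^{p-1}(\varphi(x)-\varphi(y))}{|x-y|^{N+ps}}\, dx\, dy=-2\int_{\Omega_2}\varphi(x)\int_{\Omega_2^c}\frac{u^{p-1}(y)}{|x-y|^{N+ps}}\, dy\, dx<0,
\]
because $u>0$ on $\Omega_1\subseteq \Omega_2^c$ — a contradiction. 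Without this cross-component interaction step, your proof establishes only the weaker, component-wise dichotomy, which is not what the lemma asserts.
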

\begin{proof}
By the comparison principle of \cite[Proposition 2.10]{IMS}, we get $u\geq 0$ a.e.\ in $\R^N$. Proceeding as in \cite[Theorem 2.4]{IMS0} we find that $u$ 
admits a l.s.c.\ representative, which to ease the notation we will identify with $u$. Then, the set $\{x\in \Omega: u(x)=0\}$ is closed in $\Omega$. 
By the weak Harnack inequality \cite[Theorem 5.2]{IMS}, it holds
\[
\inf_{B_{R/4}} u\geq \sigma \left(\dashint_{B_R\setminus B_{R/2}} u^{p-1}\, dx\right)^{\frac{1}{p-1}},\qquad \forall B_R\subseteq \Omega.
\]
which implies that $\{x\in \Omega: u(x)=0\}$ is open in $\Omega$. 
%\footnote{Ingenuamente, se $u=0$ su un Cantor come fai ad usare la Harnack?
%	Come hai visto ho menzionato anche [3, Thm A.1] dove Brasco e franzina avevano gia ottenuto il risultato col Log Lemma. \textcolor{red}{Non ho capito la cosa del Cantor. Sto solo dicendo che $u$  ha {\em un rappresentante l.s.c.} strettamente positivo ovunque. E' di piu' di dire che $u>0$ a.e. (come in \cite{BF}), pensa a $u(x)=|x|$: ogni suo rappresentante l.s.c. si annulla in $0$. Poi e' vero che se ho un rappresentante l.s.c. positivo ovunque, ne esiste anche uno che si annulla su un Cantor, basta che la ponga a zero li'. Ma quelli sono rappresentanti brutti. }}
Suppose $u\neq 0$, and let $\Omega=\cup_{j\in {\cal J}}\Omega_j$ where $\Omega_j$ are the connected components of $\Omega$. It follows from the previous discussion that for each $j\in {\cal J}$, either $u$ is strictly positive everywhere in $\Omega_j$ or it vanishes identically. Since $u\neq 0$, there is a connected component, say,  $\Omega_1$ such that $u>0$ in $\Omega_1$. Suppose now by contradiction that there exists another connected component, say, $\Omega_2$, such that $u\equiv 0$ in $\Omega_2$, and let $\varphi\in C^\infty_c(\Omega_2)$, $\varphi\geq 0$, $\varphi\neq 0$. Testing \eqref{crit} with $\varphi$ we get
\[
0\leq \int_{\R^{2N}} \frac{(u(x)-u(y))^{p-1}(\varphi(x)-\varphi(y))}{|x-y|^{N+ps}}\, dx\, dy=-2\int_{\Omega_2}\varphi(x)\int_{\Omega_2^c}\frac{u^{p-1}(y)}{|x-y|^{N+ps}}\, dy\, dx<0, 
\]
since $u>0$ in $\Omega_1\subseteq \Omega_2^c$.
\end{proof}

\begin{remark}
A similar statement is provided in \cite[Theorem A.1]{BF} with a different proof. Notice that, contrary to the local case $s=1$, connectedness of $\Omega$ is not required. This is a typical feature of nonlocal problems, which was first outlined in \cite{BPa}.
\end{remark}

 \subsection{Concentration-Compactness}

We now prove a concentration-compactness lemma which was first stated without proof by P.L.\ Lions in \cite[Remark I.6]{Lions}.  We say that a sequence of functions $\{f_n\}_n\subseteq L^1(\R^N)$ converges tightly to a Borel regular measure $d\mu$ if 
\[
\forall \varphi \in C_b(\R^N),\qquad \int_{\R^N}\varphi f_n\, dx\to \int_{\R^N}\varphi\, d\mu,
\]
where $C_b(\R^N)$ is the Banach space of bounded continuous functions on $\R^N$. Notice that this convergence is stronger than the usual weak convergence of measures as linear functionals on the separable space $C_0(\R^N)$: indeed boundedness of $\{|f_n|_1\}$ does not suffice to the sequential compactness with respect to tight convergence. Nevertheless, we will still denote by with the symbol $\weakstarto$ the notion of tight convergence. Prokhorov theorem ensures that bounded sequences $\{f_n\}_n$ are relatively sequentially compact if and only if the sequence is {\em tight} in the sense that
\[
\forall \eps>0 \ \exists A\subseteq\R^N:\quad \sup_n\int_{A^c}|f_n|\, dx<\eps.
\]

\begin{theorem}
	\label{lions}
Let $\{u_n\}$ be a bounded sequence in $W^{s,p}_0(\Omega)$. Then, up to a subsequence, there exists $u\in W^{s,p}_0(\Omega)$, two Borel regular measures $\mu$ and $\nu$, $\Lambda$ denumerable, $x_j\in\overline{\Omega}$, $\nu_j\geq 0$, $\mu_j\geq 0$ with $\nu_j+\mu_j>0$ $j\in\Lambda$, such that 
\begin{align}
&u_n\to  u\quad \text{weakly in $W^{s,p}_0(\Omega)$ and strongly in $L^p(\Omega)$},\nonumber \\
\label{meas}
&|D^s u_n|^p\weakstarto d\mu,\quad |u_n|^{p^*}  \weakstarto d\nu  \\
 \label{mu}
 &d\mu \geq  |D^s u|^p+\sum_{j\in\Lambda} \mu_j \delta_{x_j}, \qquad \mu_j:=\mu(\{x_j\}),\\
 \label{nu}
 &d\nu = |u|^{p^*}+\sum_{j\in\Lambda} \nu_j \delta_{x_j}  , \qquad \nu_j:=\nu(\{x_j\}),\\
 \label{sobL}
 &\mu_j\geq \S\nu_j^{\frac{p}{p^*}}.
\end{align}
\end{theorem}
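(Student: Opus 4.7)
The plan is to adapt Lions's classical concentration--compactness argument to the non-local fractional $p$-setting. Since $\{u_n\}$ is bounded in the reflexive space $W^{s,p}_0(\Omega)$, a subsequence converges weakly to some $u\in W^{s,p}_0(\Omega)$; the compact embedding $W^{s,p}_0(\Omega)\hookrightarrow L^p(\Omega)$ upgrades this to strong convergence in $L^p(\Omega)$ and, along a further subsequence, to a.e.\ convergence in $\R^N$. The nonnegative densities $|u_n|^{p^*}$ are supported in $\Omega$ and bounded in $L^1(\R^N)$, hence tight. For $|D^s u_n|^p$ the total mass is controlled by $[u_n]_{s,p}^p$, and since $u_n\equiv 0$ on $\Omega^c$ the pointwise estimate $|D^s u_n|^p(x)\leq C|x|^{-N-ps}\int_\Omega|u_n|^p\,dy$ for $|x|$ large yields tightness. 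Prokhorov's theorem then produces, along a subsequence, Borel regular measures $\mu,\nu$ with $|D^s u_n|^p\weakstarto d\mu$ and $|u_n|^{p^*}\weakstarto d\nu$ in the tight sense.

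\textbf{Decomposition of $d\mu$ and $d\nu$.} Setting $v_n:=u_n-u$, the pointwise Brezis--Lieb lemma (exploiting a.e.\ convergence together with the $L^{p^*}$-bound on $u_n$) yields $|u_n|^{p^*}-|v_n|^{p^*}-|u|^{p^*}\to 0$ in $L^1(\R^N)$, so $|v_n|^{p^*}\weakstarto d\tilde\nu:=d\nu-|u|^{p^*}\,dx\geq 0$ tightly. On the gradient side, the functional $w\mapsto\int\varphi\,|D^s w|^p\,dx$ is convex for every nonnegative $\varphi\in C_c(\R^N)$, hence weakly lower semicontinuous on $W^{s,p}_0(\Omega)$; this gives $|D^s u|^p\,dx\leq d\mu$ as Borel measures, so one may write $d\mu=|D^s u|^p\,dx+d\sigma$ for a nonnegative Radon measure $\sigma$. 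The non-local analogue of Brezis--Lieb, encoded in the forthcoming Lemma \ref{lemmavanish}, will show that $|D^s v_n|^p\weakstarto d\sigma$ tightly as well.

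\textbf{Reverse Hölder inequality and atomic structure.} For $\varphi\in C_c^\infty(\R^N)$ I apply the Sobolev inequality to $\varphi v_n\in W^{s,p}_0(\R^N)$, then bound the Gagliardo seminorm via the approximate Leibniz \eqref{lieb} with parameter $\theta>0$:
\[
\S\,\Bigl(\int|\varphi v_n|^{p^*}\,dx\Bigr)^{p/p^*}\leq [\varphi v_n]_{s,p}^p\leq (1+\theta)\int\varphi^p\,|D^s v_n|^p\,dx+C_\theta\int|D^s\varphi|^p\,|v_n|^p\,dx.
\]
A direct estimate on the kernel shows $|D^s\varphi|^p\in L^\infty(\R^N)\cap L^1(\R^N)$, while $v_n\to 0$ in $L^p(\R^N)$ (since $u_n$ and $u$ both vanish outside the bounded set $\Omega$), so the last integral vanishes as $n\to\infty$. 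Passing to the limit with the tight convergences yields
\[
\S\,\Bigl(\int\varphi^{p^*}\,d\tilde\nu\Bigr)^{p/p^*}\leq (1+\theta)\int\varphi^p\,d\sigma,
\]
and letting $\theta\searrow 0$ produces the reverse Hölder inequality $\S(\int\varphi^{p^*}d\tilde\nu)^{p/p^*}\leq \int\varphi^p\,d\sigma$ valid for every $\varphi\in C_c^\infty(\R^N)$. The classical measure-theoretic lemma of \cite{Lions} then forces $\tilde\nu$ to be purely atomic, $d\tilde\nu=\sum_{j\in\Lambda}\nu_j\delta_{x_j}$ with $\Lambda$ denumerable, and $\sigma(\{x_j\})\geq\S\nu_j^{p/p^*}$ at each atom. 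Since $|D^s u|^p\,dx$ is absolutely continuous, $\mu_j:=\mu(\{x_j\})=\sigma(\{x_j\})$, completing \eqref{mu}--\eqref{sobL}.

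\textbf{Main obstacle.} The principal difficulty is the non-local identification $|D^s v_n|^p\weakstarto d\sigma=d\mu-|D^s u|^p\,dx$. Contrary to the local situation, the integrand $|u_n(x)-u_n(y)|^p$ couples arbitrarily distant pairs of points, so expanding $|v_n+u|^p$ pointwise produces cross terms whose weak limits as measures on $\R^N$ must be analysed with care. I expect the argument of Lemma \ref{lemmavanish} to rely on a domain decomposition in the $y$-variable, separating the integrations over a small neighbourhood of $x$, over the complement of a large ball, and over the region where $|u|$ is concentrated, and then to exploit a.e.\ convergence of $v_n$, the kernel decay $|x-y|^{-N-ps}$, and the uniform bound on $[u_n]_{s,p}$ to pass to the limit in each cross term.
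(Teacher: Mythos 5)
Your setup (tightness of both sequences, the estimate $|D^s u_n|^p(x)\le C[u_n]_{s,p}^p|x|^{-N-ps}$ for $x$ far from $\Omega$, the scalar Brezis--Lieb giving \eqref{nu}, and convexity of $w\mapsto\int\varphi|D^sw|^p\,dx$ giving $d\mu\ge|D^su|^p\,dx$) matches the paper's. After that you diverge: you route the argument through $v_n:=u_n-u$ and lean on a purported ``non-local Brezis--Lieb'' stating that $|D^s v_n|^p\weakstarto d\sigma:=d\mu-|D^su|^p\,dx$, which you hope Lemma~\ref{lemmavanish} delivers. This is where the gap sits. Lemma~\ref{lemmavanish} is merely the elementary statement that $\delta^N\int_{B_\delta^c}|u|^p|x|^{-N-ps}\,dx\to 0$ for $u\in L^{p^*}$; it says nothing about splitting the density $|D^su_n|^p$ into $|D^sv_n|^p+|D^su|^p+o_{L^1}(1)$, and nothing in your proposal actually performs the cross-term cancellation you yourself flag as the ``principal difficulty.'' As written, the step ``$|D^sv_n|^p\weakstarto d\sigma$'' is an unproved (and nontrivial) substitute for the real argument.

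The paper avoids that obstacle entirely. It works with $u_n$ directly: applying Sobolev and the approximate Leibniz inequality \eqref{lieb} to $u_n\varphi$ and passing to the limit, one gets
\[
\S\Bigl(\int|\varphi|^{p^*}\,d\nu\Bigr)^{p/p^*}\leq(1+\theta)\int|\varphi|^p\,d\mu+C_\theta\int|D^s\varphi|^p|u|^p\,dx,
\]
with the last term playing the role your missing Brezis--Lieb was supposed to eliminate. The paper then chooses $\varphi=\varphi_\delta$ concentrated on $B_{2\delta}(x_j)$ and proves (this is claim \eqref{claim}) that $\int|u|^p|D^s\varphi_\delta|^p\,dx\to 0$ as $\delta\downarrow 0$, by splitting $\R^{2N}$ into $B_{2\delta}\times B_{2\delta}$, $B_{2\delta}^c\times B_\delta$, and $B_\delta\times B_{2\delta}^c$; Lemma~\ref{lemmavanish} is used precisely on the last of these three pieces. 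Letting $\theta\downarrow 0$, $\delta\downarrow 0$ then yields \eqref{sobL} without ever identifying the tight limit of $|D^sv_n|^p$. So the corrected version of your plan should drop the $v_n$-decomposition on the gradient side, keep the error term $C_\theta\int|D^s\varphi_\delta|^p|u|^p\,dx$ explicitly, and kill it by the domain decomposition plus Lemma~\ref{lemmavanish}, rather than by an unproved structural splitting of the fractional density.
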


We will need the following lemma.

\begin{lemma}
\label{lemmavanish}
Let $N>ps$. For any $u\in L^{p^*}(\R^N)$ it holds
\[
\lim_{\delta\downarrow 0}\delta^N\int_{B_\delta^c}\frac{|u|^p}{|x|^{N+ps}}\, dx=0.
\]
\end{lemma}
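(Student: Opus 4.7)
The plan is to separate two effects: a uniform Hölder-type bound that controls the integral by $\|u\|_{p^*}^p$ independently of $\delta$, and a direct pointwise estimate that gives decay for nice compactly supported approximants. A density argument then closes the gap.

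First I would apply H\"older's inequality with conjugate exponents $p^*/p$ and $p^*/(p^*-p)=N/(ps)$ to obtain
\[
\int_{B_\delta^c}\frac{|u|^p}{|x|^{N+ps}}\,dx\leq \left(\int_{B_\delta^c}|u|^{p^*}\,dx\right)^{p/p^*}\!\left(\int_{B_\delta^c}\frac{dx}{|x|^{(N+ps)N/(ps)}}\right)^{(p^*-p)/p^*}.
\]
A direct computation in polar coordinates shows $(N+ps)N/(ps)=N+N^2/(ps)$, so the weight integral equals $C\,\delta^{-N^2/(ps)}$. Since $(N^2/(ps))\cdot(p^*-p)/p^*=N$, raising to the appropriate power yields the key uniform bound
\[
\delta^N\int_{B_\delta^c}\frac{|u|^p}{|x|^{N+ps}}\,dx\leq C\,\|u\|_{p^*}^p,\qquad \text{for every }\delta>0,
\]
with $C$ depending only on $N,p,s$.

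Next I would handle the case $u\in C_c(\R^N)$ directly. If $\operatorname{supp} u\subseteq B_R$ and $|u|\leq M$, then
\[
\delta^N\int_{B_\delta^c}\frac{|u|^p}{|x|^{N+ps}}\,dx\leq M^p\,\omega_{N-1}\,\delta^N\int_\delta^R r^{-ps-1}\,dr\leq \frac{M^p\omega_{N-1}}{ps}\,\delta^{N-ps},
\]
which vanishes as $\delta\downarrow 0$ precisely because $N>ps$. This is the only step that uses the standing hypothesis on dimensions.

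Finally I would conclude by density. Given $u\in L^{p^*}(\R^N)$, pick $u_k\in C_c(\R^N)$ with $u_k\to u$ in $L^{p^*}$. Using $(a+b)^p\leq 2^{p-1}(a^p+b^p)$, the uniform Hölder bound applied to $u-u_k$, and the compact-support vanishing applied to each $u_k$, I get
\[
\limsup_{\delta\downarrow 0}\delta^N\int_{B_\delta^c}\frac{|u|^p}{|x|^{N+ps}}\,dx\leq 2^{p-1}C\,\|u-u_k\|_{p^*}^p\xrightarrow[k\to\infty]{}0.
\]
There is no serious obstacle here; the main point to watch is simply that the exponents match up so as to produce the factor $\delta^{-N}$ in the weight integral, which is exactly compensated by the prefactor $\delta^N$ and makes the H\"older bound uniform in $\delta$.
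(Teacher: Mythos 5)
Your proposal is correct and follows essentially the same route as the paper: a density argument in $L^{p^*}$, with the uniform-in-$\delta$ Hölder bound $\delta^N\int_{B_\delta^c}|u|^p|x|^{-N-ps}\,dx\leq C\|u\|_{p^*}^p$ handling the remainder and a direct computation (using $N>ps$) handling the nice approximants. The only cosmetic difference is that you approximate by $C_c(\R^N)$ with explicit exponent arithmetic, while the paper uses bounded approximants and states the Hölder step more tersely.
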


\begin{proof}
If $u\in L^\infty(\R^N)$, the assertion immediately follows, since a direct computation yields
$$
\delta^N\int_{B_\delta^c}\frac{|u|^p}{|x|^{N+ps}}\, dx\leq C\|u\|_{L^\infty}^p\delta^{N-sp}.
$$	
In the general case, let $\{u_k\}\subset C^\infty_c(\R^N)$ be such that $u_k\to u$ in $L^{p^*}(\R^N)$ as $k\to\infty$. By using
H\"older inequality, for any $\delta>0$ and $k\in\N$, we obtain
\begin{align*}
\delta^N\int_{B_\delta^c}\frac{|u|^p}{|x|^{N+ps}}\, dx & \leq  C\delta^N \int_{B_\delta^c}\frac{|u_k-u|^p}{|x|^{N+ps}}\, dx+
C\delta^N\int_{B_\delta^c}\frac{|u_k|^p}{|x|^{N+ps}}\, dx \\
& \leq  C|u_k-u|^p_{p^*}+
C\delta^N\int_{B_\delta^c}\frac{|u_k|^p}{|x|^{N+ps}}\, dx.
\end{align*}
Since any $u_k$ is bounded, letting $\delta\to 0$ yields
\begin{align*}
\limsup_{\delta\to 0}\delta^N\int_{B_\delta^c}\frac{|u|^p}{|x|^{N+ps}}\, dx \leq  C|u_k-u|^p_{p^*}.
\end{align*}
Finally, letting $k\to\infty$ concludes the proof.
\end{proof}

Now we proceed proving Theorem \ref{lions}

\begin{proof}
Since $\Omega$ is bounded and $u_n\equiv 0$ in $\Omega^c$, the sequence $\{|u_n|^{p^*}\}$ is tight, ensuring the existence of $\nu$ (and clearly ${\rm supp}(\nu)\subseteq \overline\Omega$). To prove the tightness of $\{|D^su_n|^p\}$, let $U$ be open and  bounded such that $U\supset \overline\Omega$. If ${\rm dist}(U^c, \Omega)=: \theta>0$, then for any $x\in U^c$ and $y\in \Omega$ it holds $|x-y|\geq C_\theta|x|$, and thus
\[
|D^su_n|^p(x)=\int_{\Omega}\frac{|u_n(y)|^p}{|x-y|^{N+ps}}\, dy\leq C\frac{[u_n]_{s,p}^p}{|x|^{N+ps}}, \qquad \text{for a.e. $x\in U^c$}.
\]
The latter inequality readily implies tightness of $\{|D^su_n|\}$ and thus \eqref{meas} is proved.
 We come to the proof of \eqref{mu}, \eqref{nu} and \eqref{sobL}.\ We shall follow the proof of \cite[Lemma I.1]{Lions}, by supposing first that $u\equiv 0$.
From Sobolev's inequality and \eqref{lieb}, we have, for any $\varphi\in C^\infty_c(\R^N)$,
\begin{equation}
\label{temp}
\S|u_n\varphi|_{p^*}^{\frac{p}{p^*}}\leq [u_n\varphi]_{s,p}^p\leq (1+\theta)\int_{\R^N} |D^s u_n|^p|\varphi|^p\, dx+C_\theta\int_{\R^N} |D^s\varphi|^p|u_n|^p\, dx.
\end{equation}
Letting $n\to +\infty$ and using that 
$u_n\to 0$ in $L^p(\R^N)$ and $|D^s\varphi|^p\in L^\infty(\R^N)$, we obtain
\[
\S\left(\int_{\R^N} |\varphi|^{p^*}\, d\nu\right)^{\frac{p}{p^*}}\leq (1+\theta)\int_{\R^N} |\varphi|^p\, d\mu.
\]
Letting $\theta\downarrow 0$ proves \eqref{mu} and 
\eqref{nu}, due to \cite[Lemma I.2]{Lions}.\ Finally, the 
previous inequality easily implies, for any $j\in \Lambda$
\begin{equation}
\label{compara}
\S\big(\nu(B_\delta(x_j))\big)^{\frac{p}{p^*}}\leq (1+\theta)\mu(B_{2\delta}(x_j)),
\end{equation}
which provides \eqref{sobL} taking the limit for $\theta\downarrow 0$ and then $\delta\downarrow 0$.\ To prove the case $u\neq 0$, one can proceed as in \cite{Lions} to obtain \eqref{nu}. Concerning \eqref{mu}, we first claim that
%the previous case yields
%$$
%|D^s (u_n-u)|^p\rightharpoonup d\mu_0\geq\sum_{j\in\Lambda} \mu_j \delta_{x_j},\quad \text{for some measure $\mu_0$.}
%$$
%Then, since by the Brezis-Lieb lemma we have
%$$
%\int_{\R^N} |D^s u_n|^p\varphi \, dx=\int_{\R^N} |D^s u|^p\varphi \, dx+\int_{\R^N} |D^s (u_n-u)|^p\varphi \, dx+o(1),\quad
%\text{for any $\varphi\in C^\infty_c(\R^N)$},
%$$
%\eqref{mu} follows as desired.
$d\mu\geq |D^s u|^p$. Indeed, for any $\varphi\in C^\infty_c(\R^N)$, $\varphi\geq 0$, the functional 
\[
v\mapsto \int_{\R^N}|D^sv|^p\varphi\, dx
\]
is convex and continuous, therefore
$u_n\weakto u$ in $W^{s,p}_0(\Omega)$ implies
$$
\int_{\R^N}\varphi\, d\mu=\lim_{n\to\infty}\int_{\R^N} |D^s u_n|^p\varphi \, dx\geq \int_{\R^N} |D^s u|^p\varphi \, dx,\quad
\text{for any $\varphi\in C^\infty_c(\R^N)$,\,\, $\varphi\geq 0$}.
$$
On the other hand \eqref{compara} implies $\mu(\{x_j\})>0$ whenever $\nu(\{x_j\})>0$. Then \eqref{mu} follows.
%\footnote{Non vedo bene questo "\eqref{sobL} implies \eqref{mu}",
%	magari sviscerala meglio}
We come to \eqref{sobL}. We take the limit for $n\to +\infty$ in \eqref{temp} to obtain
\[
\S\left(\int_{\R^N} |\varphi|^{p^*}\, d\nu\right)^{\frac{p}{p^*}}\leq (1+\theta)\int_{\R^N} |\varphi|^p\, d\mu+ 
C_\theta\int_{\R^N} |D^s\varphi|^p|u|^p\, dx.
\]
Now we fix $x_j$, and for any $\delta>0$, let $\varphi_\delta\in C^\infty_c(B_{2\delta}(x_j))$ satisfy
 \begin{equation}
 \label{defphi}
0\leq \varphi_\delta,\qquad \varphi\lfloor_{B_\delta}=1, \qquad | \varphi_\delta|_\infty\leq 1,\qquad |\nabla \varphi_\delta|_\infty\leq C/\delta.
\end{equation}
We claim the following:
\begin{equation}
\label{claim}
\forall u\in L^{p^*}(\R^N),\qquad \lim_{\delta\downarrow 0}\int_{\R^{2N}}|u|^p|D^s\varphi_\delta|^p\, dx= 0.
\end{equation}
Without loss of generality, suppose $x_j=0$. Let $A=(B_{2\delta}\times B_{2\delta})\cup (B_{\delta}\times B_{2\delta}^c)\cup (B_{2\delta}^c\times B_\delta)$ and notice that on $\R^{2N}\setminus A$ it holds $|\varphi_\delta(x)-\varphi_\delta(y)|=0$. Then on each of the three pieces forming $A$ we proceed as follows. Since $|\varphi_\delta(x)-\varphi_\delta(y)|\leq C\delta^{-1}|x-y|$, on $B_{2\delta}\times B_{2\delta}$ we have
\[
\begin{split}
\int_{B_{2\delta}\times B_{2\delta}}\frac{|u(y)|^p|\varphi_\delta(x)-\varphi_\delta(y)|^p}{|x-y|^{N+ps}}\, dx\, dy&\leq \int_{B_{2\delta}\times B_{4\delta}}|u(y)|^p\frac{C^p\delta^{-p}|z|^p}{|z|^{N+ps}}\, dy\, dz\\
&\leq \frac{C}{\delta^{ps}}\Big(\int_{B_{2\delta}}|u|^{p^*}\, dy\Big)^{\frac{p}{p^*}}|B_{2\delta}|^{1-\frac{p}{p^*}}=C\Big(\int_{B_{2\delta}}|u|^{p^*}\, dy\Big)^{\frac{p}{p^*}}
\end{split}
\]
which vanishes as $\delta\downarrow 0$.
On $B_{2\delta}^c\times B_{\delta}$, the triangle inequality implies that $|x-y|\geq |x|/2$ and thus, as $\delta\downarrow 0$,
\[
\begin{split}
\int_{B_{2\delta}^c\times B_{\delta}}\frac{|u(y)|^p|\varphi_\delta(x)-\varphi_\delta(y)|^p}{|x-y|^{N+ps}}\, dx\, dy&\leq C\int_{B_{2\delta}^c\times B_{\delta}}|u(y)|^p\frac{1}{|x|^{N+ps}}\, dx\,dy\\
&\leq C\Big(\int_{B_{\delta}}|u|^{p^*}\, dy\Big)^{\frac{p}{p^*}}|B_{\delta}|^{1-\frac{p}{p^*}}\frac{1}{\delta^{ps}}=o(1).
\end{split}
\]
Finally on $B_\delta\times B_{2\delta}^c$ it holds $|x-y|\geq |y|/2$ and thus
\[
\int_{B_{\delta}\times B_{2\delta}^c}\frac{|u(y)|^p|\varphi_\delta(x)-\varphi_\delta(y)|^p}{|x-y|^{N+ps}}\, dx\, dy\leq C\delta^N\int_{B_{2\delta}^c}\frac{|u(y)|^p}{|y|^{N+ps}}\, dy
\]
which vanishes as $\delta\downarrow 0$ by the previous lemma, and this proves \eqref{claim}.
Now if $\varphi_\delta=1$ in $B_\delta$ we obtain
\[
\S\big(\nu(B_\delta)\big)^{\frac{p}{p^*}}\leq (1+\theta)\mu(B_{2\delta})+C_\theta o(1)
\]
which gives \eqref{sobL} taking $\theta\downarrow 0$ and then $\delta\downarrow 0$. 

\end{proof}

\section{Limiting behaviour for $\eps\to 0$}

The next result provides lower bounds for the masses $\mu_j$ and $\nu_j$ given by the Concentration-Compactness theorem.

 \begin{lemma}
 \label{boundbelow}
 Let $u_\eps$ solve \eqref{probcrit}, and $\nu_j$, $\mu_j$ be as in \eqref{mu}-\eqref{nu}. Then, for any $j\in \Lambda$,
 \begin{equation}
 \label{masse}
  \mu_j \geq \S^{\frac{N}{sp}},\qquad  \nu_j\geq \S^{\frac{N}{sp}}.
  \end{equation}
  \end{lemma}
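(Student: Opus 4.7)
The plan is to complement the Sobolev-type inequality $\mu_j\geq \S\nu_j^{p/p^*}$ from \eqref{sobL} with the bound $\mu_j\leq \nu_j$, which I will extract from the equation $(-\Delta_p)^s u_n=|u_n|^{p^*-2-\eps_n}u_n$ by localizing around $x_j$ and passing to the limit. Once both inequalities are at hand, using $1-p/p^*=ps/N$ will yield $\nu_j\geq \S^{p^*/(p^*-p)}=\S^{N/(sp)}$, and plugging this back into \eqref{sobL} gives $\mu_j\geq \S\cdot \S^{(N-ps)/(sp)}=\S^{N/(sp)}$, concluding the proof.

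To localize, fix $j\in\Lambda$ and, for $\delta>0$ small, pick a cutoff $\varphi_\delta$ as in \eqref{defphi}, supported in $B_{2\delta}(x_j)$ and identically $1$ on $B_\delta(x_j)$. I would test \eqref{probcrit} against the admissible function $u_n\varphi_\delta^p\in W^{s,p}_0(\Omega)$; the algebraic identity
\[
u_n(x)\varphi_\delta(x)^p-u_n(y)\varphi_\delta(y)^p=\varphi_\delta(x)^p(u_n(x)-u_n(y))+u_n(y)(\varphi_\delta(x)^p-\varphi_\delta(y)^p)
\]
rewrites the Dirichlet side as $\int_{\R^N}|D^s u_n|^p\varphi_\delta^p\,dx+R_n^\delta$, with $R_n^\delta$ the nonlocal commutator produced by the second term. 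By \eqref{meas} the first summand converges as $n\to\infty$ to $\int\varphi_\delta^p\,d\mu$, which in turn tends to $\mu_j$ as $\delta\downarrow 0$. On the right-hand side, the elementary bound $|u_n|^{p^*-\eps_n}\leq 1+|u_n|^{p^*}$ together with $|\mathrm{supp}\,\varphi_\delta|\leq C\delta^N$ and the tight convergence $|u_n|^{p^*}\weakstarto \nu$ will give $\limsup_{\delta\downarrow 0}\limsup_{n\to\infty}\int|u_n|^{p^*-\eps_n}\varphi_\delta^p\,dx\leq \nu_j$. The only remaining (and most delicate) point is to show that the commutator vanishes in the double limit.

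To estimate $R_n^\delta$, I would use $|\varphi_\delta(x)^p-\varphi_\delta(y)^p|\leq p|\varphi_\delta(x)-\varphi_\delta(y)|$ (since $0\leq \varphi_\delta\leq 1$) together with Hölder's inequality with exponents $p/(p-1)$ and $p$, obtaining
\[
|R_n^\delta|\leq p\,[u_n]_{s,p}^{p-1}\Bigl(\int_{\R^N}|u_n|^p|D^s\varphi_\delta|^p\,dx\Bigr)^{1/p}.
\]
Since $u_n\to u$ strongly in $L^p(\R^N)$ (by compactness of $W^{s,p}_0(\Omega)\hookrightarrow L^p(\Omega)$ on the bounded domain $\Omega$) and $|D^s\varphi_\delta|^p\in L^\infty(\R^N)$ for each fixed $\delta$, the inner integral converges as $n\to\infty$ to $\int|u|^p|D^s\varphi_\delta|^p\,dx$, which in turn tends to $0$ as $\delta\downarrow 0$ by claim \eqref{claim} in the proof of Theorem~\ref{lions} (itself based on Lemma~\ref{lemmavanish}). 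Since $[u_n]_{s,p}$ is bounded, this produces $\lim_{\delta\downarrow 0}\limsup_{n\to\infty}|R_n^\delta|=0$, hence $\mu_j\leq \nu_j$. The hard part is exactly this double limit for $R_n^\delta$: the order $n\to\infty$ then $\delta\downarrow 0$ is essential, and both the strong $L^p$-convergence of $u_n$ and the sharp cancellation encoded in Lemma~\ref{lemmavanish} must cooperate to deliver the desired smallness.
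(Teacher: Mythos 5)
Your proof is correct and follows essentially the same route as the paper: test the equation against a localized multiple of $u_n$, split the Dirichlet form to isolate $\int|D^su_n|^p$ times the cutoff, control the resulting commutator by H\"older combined with the strong $L^p$ convergence of $u_n$ and the vanishing property \eqref{claim} (Lemma~\ref{lemmavanish}), and derive $\nu_j\geq\mu_j$, which together with \eqref{sobL} yields the claimed lower bounds. The only difference is cosmetic: you test with $u_n\varphi_\delta^p$ while the paper uses $u_n\varphi_\delta$; both choices lead to the same estimates.
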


\begin{proof} 
Suppose again that $x_j=0$ and choose $\varphi_\delta\in C^\infty_c(B_{2\delta})$ as in formula \eqref{defphi}.
Testing the equation with $\varphi_\delta u_\eps$, we get
\begin{equation}
\label{eql}
\begin{split}
0&=\int_{\R^{2N}}\frac{(u_\eps(x)-u_\eps(y))^{p-1}(\varphi_\delta(x)u_\eps(x)-\varphi_\delta(y)u_\eps(y))}{|x-y|^{N+ps}}\, dx\, dy -\int_{\R^N}u_\eps^{p^*-\eps}\varphi_\delta\, dx\\
&\geq \int_{\R^{N}}|D^su_\eps|^{p}\varphi_\delta \, dx 
-\Big(\int_{\R^N}|u_\eps|^{p^*}\varphi_\delta\, dx\Big)^{1-\frac{\eps}{p^*}}|B_{2\delta}(x_j)|^{\frac{\eps}{p^*}}\\
&\quad  +\int_{\R^{2N}}\frac{(u_\eps(x)-u_\eps(y))^{p-1}u_\eps(y)(\varphi_\delta(x)-\varphi_\delta(y))}{|x-y|^{N+ps}}\, dx\, dy.
\end{split}
\end{equation}
Moreover, by H\"older's inequality, we get
\[
\left|\int_{\R^{2N}}\frac{(u_\eps(x)-u_\eps(y))^{p-1}u_\eps(y)(\varphi_\delta(x)-\varphi_\delta(y))}{|x-y|^{N+ps}}\, dx\, dy\right|\leq [u_\eps]_{s,p}^{p-1}\Big(\int_{\R^{N}}|D^s\varphi_\delta|^p|u_\eps|^p\, dy\Big)^{\frac 1 p}.
\]
Notice that $|D^s\varphi_\delta|^p\in L^\infty(\R^N)$, since 
\[
\int_{\R^N}\frac{|\varphi_\delta(x)-\varphi_\delta(y)|^p}{|x-y|^{N+ps}}\, dy
\leq \frac{C}{\delta^p}\int_{\R^N}\frac{\min\{1, |x-y|^p\}}{|x-y|^{N+ps}}\, dy\leq \frac{C}{\delta^p}.
\]
Up to subsequences we can suppose that $u_\eps\to u\in L^p(\Omega)$ as $\eps\to 0$ and, thus,
\[
\varlimsup_{\eps\to 0}\left|\int_{\R^{2N}}\frac{(u_\eps(x)-u_\eps(y))^{p-1}u_\eps(y)(\varphi_\delta(x)-\varphi_\delta(y))}{|x-y|^{N+ps}}\, dx\, dy\right|\leq C\Big(\int_{\R^{N}}|D^s\varphi_\delta|^p|u|^p\, dy\Big)^{\frac 1 p}.
\]
Taking the limit for $\eps\to 0$ in  \eqref{eql} we therefore obtained
\begin{equation}
\label{munu}
\int_{\R^N}\varphi_\delta\, d\mu\leq \int_{\R^N}\varphi_\delta\, d\nu+C\Big(\int_{\R^{N}}|D^s\varphi_\delta|^p|u|^p\, dy\Big)^{\frac 1 p}
\end{equation}
or, by \eqref{claim},
\[
\int_{\R^N}\varphi_\delta\, d\mu\leq \int_{\R^N}\varphi_\delta\, d\nu+o(1) ,
\]
with $o(1)\to 0$ as $\delta\downarrow 0$.\ This implies 
$\nu_j\geq \mu_j$, which, coupled together with \eqref{sobL}, gives \eqref{masse}.
\end{proof}

\begin{remark}
	From Lemma \ref{boundbelow}, the concentration points for $\nu$ agree with
	those for $\mu$ and, being $\nu$ of finite mass, the concentration set is finite, say ${\mathscr C}:=\{x_1, \dots, x_M\},$
	for some $M\in\N$. 
\end{remark}

\noindent
Let now $V$ be an optimizer for the Sobolev constant $\S$ which solves \eqref{probcrit} with $\eps=0$.
For $\delta>0$, we define the functions
\begin{equation*} 
V_\delta(x) := \frac{1}{\delta^{(N-sp)/p}}\; V\bigg(\frac{|x|}{\delta}\bigg),
\end{equation*}
and consider
\begin{equation*}
m_{\delta} := \frac{V_\delta(1)}{V_\delta(1) - V_\delta(\theta)}, \quad\,\,
G_{\delta}(t) := \begin{cases}
0, & 0 \leq t \leq V_\delta(\theta),\\
m_{\delta}\, (t - V_\delta(\theta)), & V_\delta(\theta) \leq t \leq V_\delta(1),\\
t, & t \geq V_\delta(1).
\end{cases}
\end{equation*}
where $\theta>1$ is a suitable constant introduced in \cite{PYMS} 
and the radially symmetric function
\[
v_{\delta}(r) = G_{\delta}(V_\delta(r))= 
\begin{cases}
V_\delta(r), & r \leq 1, \\[2pt]
0, & r \geq \theta.
\end{cases}
\]
We have the following estimates.

\begin{lemma} \label{immanuel}
	There exists a constant $C = C(N,p,s) > 0$ such that 
	\begin{align*}
	[v_{\delta}]_{s,p}^p &\leq \S^{\frac{N}{ps}} + C\delta^{\frac{N-ps}{p-1}}, \\
	\int_{\R^N} v_\delta^{p^*-\eps}dx &\geq \int_{\R^N} V_\delta(x)^{p^*-\eps}dx-C\delta^{(p^*-\eps)\frac{N-ps}{p(p-1)}}
	\end{align*}
for any $\delta \le 1/2$ and $\eps<N/(N-sp)$.
\end{lemma}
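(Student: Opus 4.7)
The plan is to combine two ingredients: the scaling identity $[V_\delta]_{s,p}^p = [V]_{s,p}^p = \S^{\frac{N}{ps}}$, which provides the leading term for free, and the sharp pointwise decay
\[
V(r) \leq C(1+r)^{-\frac{N-ps}{p-1}}
\]
of the Sobolev extremal $V$, both established in \cite{PYMS}. The decay yields the crucial asymptotics $V_\delta(1) \asymp V_\delta(\theta) \asymp \delta^{\frac{N-ps}{p(p-1)}}$ as $\delta \downarrow 0$, so that $m_\delta$ remains bounded.

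The second estimate is the easier one. Since $v_\delta \equiv V_\delta$ on $B_1$ and $0 \leq v_\delta \leq V_\delta$ pointwise,
\[
\int_{\R^N}V_\delta^{p^*-\eps}\,dx - \int_{\R^N}v_\delta^{p^*-\eps}\,dx \leq \int_{B_1^c}V_\delta^{p^*-\eps}\,dx.
\]
A change of variables $y = x/\delta$, combined with $p^* = Np/(N-ps)$, turns the right-hand side into $\delta^{(N-ps)\eps/p}\int_{|y|>1/\delta}V^{p^*-\eps}\,dy$. The hypothesis $\eps < N/(N-sp)$ ensures $V^{p^*-\eps}$ is integrable at infinity, and a direct power-count using the decay of $V$ reproduces the claimed exponent $(p^*-\eps)\frac{N-ps}{p(p-1)}$.

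For the seminorm estimate, I would set $\phi_\delta := V_\delta - v_\delta \geq 0$. A direct calculation gives $\phi_\delta \equiv 0$ on $B_1$,
\[
\phi_\delta(x) = \frac{V_\delta(\theta)}{V_\delta(1)-V_\delta(\theta)}\bigl(V_\delta(1)-V_\delta(|x|)\bigr) \leq V_\delta(\theta) \quad \text{on } B_\theta\setminus B_1,
\]
and $\phi_\delta \equiv V_\delta$ on $B_\theta^c$. I would then split the double integral defining $[v_\delta]_{s,p}^p$ according to the three-level radial partition $\{|x|\leq 1\}$, $\{1\leq|x|\leq\theta\}$, $\{|x|\geq\theta\}$ in each variable: on the inner diagonal piece the integrand coincides with that of $V_\delta$; on the outer diagonal piece $v_\delta \equiv 0$; on the intermediate diagonal piece $|v_\delta(x)-v_\delta(y)|^p = m_\delta^p|V_\delta(x)-V_\delta(y)|^p$; and on the mixed off-diagonal pieces the triangle inequality together with the pointwise bound $\phi_\delta \leq V_\delta(\theta)$ controls the deficit. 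Matching each piece against the analogous decomposition for $V_\delta$, using the sharp decay for the tail contributions and Lemma~\ref{lemmavanish}, produces $[v_\delta]_{s,p}^p - [V_\delta]_{s,p}^p \leq C V_\delta(\theta)^p$, and $V_\delta(\theta)^p \asymp \delta^{(N-ps)/(p-1)}$ closes the argument.

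The main technical obstacle lies in the mixed off-diagonal pieces---most notably $\{|x|\leq 1,\ |y|\geq\theta\}$---where one must exploit both the pointwise bound $V_\delta \leq V_\delta(1)$ on $B_1^c$ and its sharp algebraic decay at infinity, rather than merely $V \in L^{p^*}$. The final exponent $(N-ps)/(p-1)$ emerges naturally as the $p$-th power of the decay rate of $V_\delta$ evaluated at the critical transition scale $r=\theta$.
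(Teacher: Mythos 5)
For the second inequality your argument coincides with the paper's: write $\int v_\delta^{p^*-\eps}\,dx\ge\int_{B_1} V_\delta^{p^*-\eps}\,dx$, change variables, and invoke the algebraic decay of $V$ from \cite[Theorem~1.1]{BMS}. For the first inequality there is nothing in the paper to compare against---the text simply cites \cite{PYMS}---so the burden is on you, and your sketch has a concrete gap.

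The problem sits on the intermediate diagonal block $\{1\le |x|,|y|\le\theta\}$. There, as you observe, $|v_\delta(x)-v_\delta(y)|^p=m_\delta^p\,|V_\delta(x)-V_\delta(y)|^p$; but since $V_\delta(1)/V_\delta(\theta)\to\theta^{\frac{N-ps}{p-1}}>1$, the factor $m_\delta^p-1$ is bounded \emph{away} from zero, not small. The excess on this block over the corresponding block of $[V_\delta]_{s,p}^p$ is therefore a fixed constant times $[V_\delta]^p_{W^{s,p}(B_\theta\setminus B_1)}$, and your claimed rate $C\delta^{\frac{N-ps}{p-1}}\asymp V_\delta(\theta)^p$ rests entirely on showing that this restricted Gagliardo seminorm is itself $O(\delta^{\frac{N-ps}{p-1}})$. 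That does not follow from the $L^\infty$ bound $V_\delta\le V_\delta(1)$ on the annulus (a sup bound on a fixed domain does not control a fractional seminorm), nor from Lemma~\ref{lemmavanish}: that lemma is a purely qualitative $o(1)$ statement, proved by $L^{p^*}$-approximation, in a shrinking-ball scaling regime, and it cannot produce any rate, let alone the specific exponent $\frac{N-ps}{p-1}$. What makes the scheme close is a quantitative modulus-of-continuity estimate for $V$ at infinity---a gradient bound of the type $|V'(r)|\lesssim r^{-\frac{N-ps}{p-1}-1}$---whose scaled version gives the Lipschitz constant of $V_\delta$ on $B_\theta\setminus B_1$ of size $\delta^{\frac{N-ps}{p(p-1)}}$ and yields the restricted seminorm bound after splitting $|x-y|\lessgtr \delta$. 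This ingredient is not in your list and is the crux of the argument; without it the intermediate block is uncontrolled.
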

\begin{proof}
The first inequality was proved in \cite{PYMS}. Concerning the second inequality, we have
$$
\int_{\R^N} v_\delta^{p^*-\eps}dx\geq \int_{B_1} V_\delta(x)^{p^*-\eps}dx=
 \int_{\R^N} V_\delta(x)^{p^*-\eps}dx- \int_{B^c_1} V_\delta(x)^{p^*-\eps}dx.
$$
By virtue of \cite[Theorem 1.1]{BMS}, we have
\begin{align*}
\int_{B^c_1} V_\delta(x)^{p^*-\eps}dx&=\delta^{\frac{N-sp}{p}\eps}
\int_{B^c_{1/\delta}} V(x)^{p^*-\eps}dx \\
&\leq C\delta^{\frac{N-sp}{p}\eps}
\int_{B^c_{1/\delta}} \frac{1}{|y|^{\frac{N-sp}{p-1}(p^*-\eps)}}dy\leq 
C\delta^{\frac{(N-sp)(p^*-\eps)}{p(p-1)}},
\end{align*}
which concludes the proof.
\end{proof}

\begin{theorem}
\label{mp}
Let $\{u_n\}_n$ be a sequence of ground state solutions to \eqref{probcrit} for some $\eps_n\downarrow 0$. Then
$$
\lim_{n} I_{\eps_n}(u_n)=\frac{s}{N}\S^{N/ps}.
$$
\end{theorem}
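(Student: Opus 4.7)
The plan is to identify the ground state energy with the explicit Nehari value and then show convergence of the associated Sobolev constants. Since each $u_n$ is a ground state, formula \eqref{gs} gives
\[
I_{\eps_n}(u_n)=c_{p^*-\eps_n}=\left(\frac{1}{p}-\frac{1}{p^*-\eps_n}\right)\S_{p^*-\eps_n}(\Omega)^{\frac{p^*-\eps_n}{p^*-\eps_n-p}}.
\]
A direct computation gives $\frac{1}{p}-\frac{1}{p^*}=\frac{s}{N}$ and $\frac{p^*}{p^*-p}=\frac{N}{ps}$, so the whole claim reduces to showing
\[
\lim_{\eps\downarrow 0}\S_{p^*-\eps}(\Omega)=\S.
\]

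For the lower bound $\liminf_{\eps\downarrow 0}\S_{p^*-\eps}(\Omega)\geq\S$, I would apply H\"older's inequality on the bounded set $\Omega$: for any $u\in W^{s,p}_0(\Omega)$,
\[
|u|_{p^*-\eps}\leq|\Omega|^{\frac{\eps}{p^*(p^*-\eps)}}|u|_{p^*},
\]
which yields $\S_{p^*-\eps}(\Omega)\geq|\Omega|^{-\frac{p\eps}{p^*(p^*-\eps)}}\S$, and the prefactor tends to $1$ as $\eps\downarrow 0$.

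For the upper bound, I would use as competitors in the Rayleigh quotient defining $\S_{p^*-\eps}(\Omega)$ the family $v_\delta$ from Lemma \ref{immanuel}, after the rescaling $w_r(x)=r^{(N-ps)/p}v_\delta(rx)$ that preserves $[\,\cdot\,]_{s,p}$ and $|\cdot|_{p^*}$ and allows one to fit the support of $w_r$ inside $\Omega$. For fixed $\delta$, since $v_\delta$ is bounded with compact support, $|v_\delta|_{p^*-\eps}\to|v_\delta|_{p^*}$ as $\eps\downarrow 0$ by dominated convergence. Combining this with Lemma \ref{immanuel}, which gives $[v_\delta]^p_{s,p}\leq\S^{N/(ps)}+O(\delta^{(N-ps)/(p-1)})$ and $|v_\delta|_{p^*}^{p^*}\geq\S^{N/(ps)}-O(\delta^\gamma)$ for some $\gamma>0$ (using $\int_{\R^N}V^{p^*}\,dx=\S^{N/(ps)}$), one obtains $[v_\delta]_{s,p}^p/|v_\delta|_{p^*}^p\to\S$ as $\delta\downarrow 0$. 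A diagonal argument in $(\delta,\eps)$ then produces $\limsup_{\eps\downarrow 0}\S_{p^*-\eps}(\Omega)\leq\S$.

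The main technical point is managing this double limit and verifying that the error terms in Lemma \ref{immanuel} are negligible relative to $\S^{N/(ps)}$ as $\delta\downarrow 0$; both are handled cleanly by first sending $\eps\downarrow 0$ for each fixed $\delta$ and then $\delta\downarrow 0$. Combining the upper and lower bounds and substituting into the displayed expression for $I_{\eps_n}(u_n)$ yields the conclusion.
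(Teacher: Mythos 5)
Your proposal is correct and follows essentially the same route as the paper: the paper also reduces to estimating $\S_{p^*-\eps}(\Omega)$ (phrased as a lower bound via testing the equation with $u_n$ plus H\"older/Sobolev, and an upper bound via the Mountain Pass level $\sup_t I_{\eps_n}(t v_\delta)$, which by \eqref{gs} is the same quantity you compute) and then uses Lemma \ref{immanuel} with dominated convergence for the upper bound, first in $\eps$ and then in $\delta$. Your use of the formula $c_q=\bigl(\frac1p-\frac1q\bigr)\S_q(\Omega)^{q/(q-p)}$ and the scale-invariant rescaling to fit $v_\delta$ into $\Omega$ are just slightly more explicit versions of steps the paper takes implicitly, e.g.\ the assumption $\bar B_\theta\subseteq\Omega$ WLOG.
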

\begin{proof}
By testing the equation by $u_n$ and using H\"older and Sobolev inequality we get
$$
[u_n]_{s,p}^p\geq \S^{\frac{p^*-\eps_n}{p^*-p-\eps_n}}|\Omega|^{-\frac{\eps_n p}{(p^*-p-\eps_n)p^*}}
$$	
Therefore, we have
$$
I_{\eps_n}(u_n)=\left(\frac{1}{p}-\frac{1}{p^*-\eps_n}\right)[u_n]_{s,p}^{p}
\geq \left(\frac{1}{p}-\frac{1}{p^*-\eps_n}\right)\S^{\frac{p^*-\eps_n}{p^*-p-\eps_n}}|\Omega|^{-\frac{\eps_n p}{(p^*-p-\eps_n)p^*}}, 
$$
yielding
$$
\lim_{n}I_{\eps_n}(u_n)\geq \frac{s}{N}\S^{\frac{N}{ps}}.
$$	
To prove the opposite inequality, suppose without loss of generality that $\bar B_\theta\subseteq \Omega$. Since $\lim_{t\to +\infty}I_{\eps_n}( tv_\delta)=-\infty$, by the definition of the mountain pass level it holds
\[
I_{\eps_n}(u_n)\leq \sup_{t>0}I_{\eps_n}(tv_\delta)=\Big(\frac{1}{p}-\frac{1}{p^*-\eps_n}\Big)\left(\frac{[ v_\delta]_{s,p}}{|v_\delta|_{p^*-\eps_n}}\right)^{p\frac{p^*-\eps_n}{p^*-p-\eps_n}}.
\]
Using the previous lemma we thus obtain
\[
\varlimsup_{n} I_{\eps_n}(u_n)\leq \frac{s}{N}\varlimsup_{n}\frac{\big(\S^{\frac{N}{ps}}+C\delta^{\frac{N-ps}{p-1}}\big)^{\frac{p^*-\eps_n}{p^*-p-\eps_n}}}{\big(|V_\delta|_{p^*-\eps_n}^{p^*-\eps_n}- C\delta^{(p^*-\eps_n)\frac{N-ps}{p(p-1)}}\big)^{\frac{p}{p^*-p-\eps_n}}}.
\]
Now by dominated convergence, we get 
\[
\lim_{n}|V_\delta|_{p^*-\eps_n}^{p^*-\eps_n}=|V_\delta|_{p^*}^{p^*}=\S^{\frac{N}{ps}}
\]
and therefore
\[
\varlimsup_{n} I_{\eps_n}(u_n)\leq \frac{s}{N}\frac{(\S^{\frac{N}{ps}}+C\delta^{\frac{N-ps}{p-1}}\big)^{\frac{p^*}{p^*-p}}}{(\S^{\frac{N}{ps}}- C\delta^{\frac{N}{p-1}})^{\frac{p}{p^*-p}}}\leq \frac{s}{N}\S^{\frac{N}{ps}}(1+C\delta^{\frac{N-ps}{p-1}})
\]
for some universal constant $C=C(N, p, s)$. Letting $\delta\to 0$ proves the claim.
	\end{proof}

\vskip 2pt
\noindent
{\em Proof of Theorem \ref{G-S}}. The proof follows from  \eqref{masse} and Theorem \ref{mp} exactly as in \cite{GAPA}, providing us with $\bar x\in \overline{\Omega}$ satisfying both conditions.  \qed

\vskip6pt

As a direct application we have the following multiplicity result on annular domains.

\begin{theorem}
Let $\Omega= B_{R}\setminus B_{r}$ for some $R>r>0$. Then for $q<p^*$ sufficiently near $p^*$ there is a {\em continuum} of positive solutions to 
\[
\begin{cases}
(-\Delta_p)^su=u^q&\text{in $\Omega$},\\
u=0 &\text{in $\Omega^c$}.
\end{cases}
\]
\end{theorem}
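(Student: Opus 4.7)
The approach is a symmetry-breaking argument: I will produce, for $q<p^*$ sufficiently close to $p^*$, a non-radial positive ground state of \eqref{probq} and then rotate it by the orthogonal group $O(N)$ to obtain a continuum of distinct positive solutions.

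First I would note that for every $q\in(p,p^*)$ the Nehari infimum $c_q$ in \eqref{gs} is attained, thanks to the compactness of the embedding $W^{s,p}_0(\Omega)\hookrightarrow L^q(\Omega)$; by the strong maximum principle a minimizer can be taken strictly positive in $\Omega$. Since $\Omega=B_R\setminus B_r$ is $O(N)$-invariant and both $[\,\cdot\,]_{s,p}$ and $|\,\cdot\,|_q$ are preserved by the action $u\mapsto u\circ\Phi$ for $\Phi\in O(N)$, this action sends positive ground states of \eqref{probq} to positive ground states of \eqref{probq}.

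The decisive step is to show that for $q$ sufficiently near $p^*$ no positive ground state can be radial. I would argue by contradiction: assume there is a sequence $q_n\uparrow p^*$ and radial positive ground states $u_n$ of \eqref{probq} for $q=q_n$. Setting $\eps_n:=p^*-q_n\downarrow 0$, Theorem \ref{G-S} yields, up to a subsequence, $|u_n|^{p^*}\weakto \S^{N/(ps)}\delta_{\bar x}$ weakly in the sense of measures, for some $\bar x\in\overline\Omega$. Radiality of each $u_n$ makes $|u_n|^{p^*}\,dx$ rotationally symmetric, so the weak limit is $O(N)$-invariant as well; but the only rotationally symmetric Dirac mass is $\delta_0$, forcing $\bar x=0$. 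This contradicts $0\notin \overline\Omega$, since $r>0$.

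Once a non-radial positive ground state $u_q$ is available (for $q<p^*$ close enough to $p^*$), its isotropy group $\mathrm{Stab}(u_q)\subsetneq O(N)$ is a proper closed subgroup, and (assuming $N\geq 2$) the orbit $\{u_q\circ\Phi:\Phi\in O(N)\}\cong O(N)/\mathrm{Stab}(u_q)$ is a positive-dimensional compact submanifold of $W^{s,p}_0(\Omega)$, hence a continuum of pairwise distinct positive solutions of \eqref{probq}. I expect the only nontrivial point of the argument to be the exclusion of radial ground states, which rests entirely on the single-point concentration provided by Theorem \ref{G-S} together with the elementary fact that the center of the annulus is not in $\overline\Omega$; the remainder is a direct symmetry/orbit argument.
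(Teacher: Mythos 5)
Your proposal is correct and takes essentially the same route as the paper: both arguments invoke Theorem~\ref{G-S} to show that ground states concentrate at a single point of $\overline\Omega$, deduce from this that ground states must eventually be non-radial because $0\notin\overline\Omega$, and then obtain the continuum as the $SO(N)$-orbit (for $N\geq 2$, which both proofs implicitly assume). The only difference is cosmetic: you derive non-radiality by observing that the weak-$*$ limit of the rotationally invariant measures $|u_n|^{p^*}\,dx$ would have to be a rotationally invariant Dirac mass, forcing $\bar x=0$; the paper instead works at finite $n$, noting that a radial $u_n$ concentrating $>\tfrac34\S^{N/ps}$ of its mass in $B_r(\bar x)$ would by symmetry do the same in the disjoint ball $B_r(-\bar x)$, exceeding the total mass bound. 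Both hinge on exactly the same geometric fact ($|\bar x|\geq r>0$) and on Theorem~\ref{G-S}.
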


\begin{proof}
Suppose not. Then there is a sequence $\eps_n\downarrow 0$ such that there is only a denumerable set of positive solutions to \eqref{probcrit} for any $n$. 
Let $q=p^*-\eps_n$ and $\{u_n\}_n$ be a nonnegative ground state for \eqref{probcrit}, which by the Strong Maximum Principle is strictly positive in $\Omega$. 
Then, by virtue of Theorem \ref{G-S}, there exists a point $\bar x\in\overline{\Omega}$ such that, up to a subsequence, 
$|u_n|^{p^*}\weakstarto \S^{\frac{N}{ps}}\delta_{\bar x}$.
For sufficently large $n$ it thus holds
\[
\int_{\R^N}|u_n|^{p^*}\, dx< \frac{3}{2}\S^{\frac{N}{ps}},\qquad \int_{B_r(\bar x)}|u_n|^{p^*}\, dx>\frac{3}{4}\S^{\frac{N}{ps}}.
\]
For any such $\eps_n\to 0$, the solution $u_n$ eventually cannot be radial since otherwise also the integral on $B_{r}(-\bar x)$ (disjoint from $B_r(\bar x)$
as $|\bar x|\geq r$) would be greater than $3\S^{N/ps}/4$, thus yielding
\[
\frac{3}{2}\S^{\frac{N}{ps}}>\int_{\R^N}|u_n|^{p^*}\, dx\geq \int_{B_r(\bar x)}|u_n|^{p^*}\, dx+\int_{B_r(-\bar x)}|u_n|^{p^*}\, dx>\frac{3}{2}\S^{\frac{N}{ps}},
\]
for sufficiently large $n=n_0\in\N$. The map 
\[
R\in {\rm SO}_N\to W^{s,p}_0(\Omega)\ni u_{n_0}\circ R
\]
is therefore a continuous, non-constant map, all of whose image is made of positive  
solutions to \eqref{probcrit}. This gives the contradiction.
\end{proof}

\begin{lemma}
Let $\Omega$ be a bounded domain and $\{u_n\}_n$ be a sequence of ground state solutions to \eqref{probcrit} for $\eps_n\downarrow 0$ such that $I_{\eps_n}(u_n)$ is bounded and Theorem \ref{lions} holds, with $u$ being its weak limit. Then if $u\neq 0$,
\begin{equation}
\label{absc}
\int_{\R^N} |D^s u|^p\, dx\geq \S^{\frac{N}{ps}},\qquad \int_{\R^N} |u|^{p^*}\, dx\geq \S^{\frac{N}{ps}}.
\end{equation}
\end{lemma}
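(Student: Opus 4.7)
The plan is to identify the weak limit $u$ as a nontrivial weak solution of the limiting critical problem $(-\Delta_p)^s u = |u|^{p^*-2}u$ in $\Omega$ (with $u=0$ in $\Omega^c$), and then to read off the lower bounds in \eqref{absc} by testing the limit equation against $u$ itself and invoking the Sobolev inequality $\S$.

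\emph{Passing to the limit in the equation.} By the compact embedding $W^{s,p}_0(\Omega)\hookrightarrow L^p(\Omega)$ together with a standard diagonal extraction, we may suppose, after relabelling, that $u_n\to u$ strongly in $L^p(\Omega)$ and a.e.\ in $\R^N$; moreover $u=0$ a.e.\ in $\Omega^c$, since $W^{s,p}_0(\Omega)$ is weakly closed. For any $\varphi\in C^\infty_c(\Omega)$, the weak formulation of \eqref{probcrit} reads
\[
\int_{\R^{2N}}\frac{(u_n(x)-u_n(y))^{p-1}(\varphi(x)-\varphi(y))}{|x-y|^{N+ps}}\,dx\,dy=\int_\Omega |u_n|^{p^*-2-\eps_n}u_n\,\varphi\,dx.
\]
For the right-hand side, the uniform $L^{p^*}$ bound on $u_n$ gives equi-boundedness of $|u_n|^{p^*-1-\eps_n}$ in $L^{p^*/(p^*-1-\eps_n)}(\mathrm{supp}\,\varphi)$; combined with pointwise a.e.\ convergence, Vitali's theorem yields convergence to $\int |u|^{p^*-2}u\,\varphi\,dx$. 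For the left-hand side, setting $p':=p/(p-1)$ and
\[
\omega_n(x,y):=\frac{(u_n(x)-u_n(y))^{p-1}}{|x-y|^{(N+ps)/p'}},
\]
we have $\|\omega_n\|_{L^{p'}(\R^{2N})}^{p'}=[u_n]_{s,p}^p$ uniformly bounded and, by a.e.\ convergence, $\omega_n\to \omega^\ast:=(u(x)-u(y))^{p-1}/|x-y|^{(N+ps)/p'}$ a.e.\ in $\R^{2N}$. The standard fact that an $L^{p'}$-bounded sequence converging pointwise a.e.\ converges weakly to the a.e.\ limit (via Fatou plus Vitali on test functions) then gives $\omega_n\weakto \omega^\ast$ in $L^{p'}(\R^{2N})$, and pairing against $(\varphi(x)-\varphi(y))/|x-y|^{(N+ps)/p}\in L^p(\R^{2N})$ yields convergence of the LHS. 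In the limit, $u$ weakly solves the critical problem.

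\emph{Deriving the lower bounds.} Since $u\in W^{s,p}_0(\Omega)$, testing the limiting equation against $u$ itself (by density of $C^\infty_c(\Omega)$) gives
\[
[u]_{s,p}^p=\int_{\R^N}|u|^{p^*}\,dx.
\]
Combining this with the Sobolev inequality $[u]_{s,p}^p\geq \S\,|u|_{p^*}^p$ and using $u\neq 0$, so that $|u|_{p^*}>0$, we conclude $|u|_{p^*}^{p^*-p}\geq \S$. The elementary identity $p^*/(p^*-p)=N/(ps)$ then yields
\[
\int_{\R^N}|u|^{p^*}\,dx\geq \S^{\frac{N}{ps}},
\]
and the corresponding bound for $\int_{\R^N}|D^su|^p\,dx$ follows at once from the identity $[u]_{s,p}^p=\int |u|^{p^*}\,dx$.

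\emph{Main obstacle.} The delicate point is the passage to the limit in the nonlinear nonlocal operator when $p\neq 2$: the semilinear argument based on the Hilbert inner product is unavailable, and one must instead reconcile the weak $L^{p'}$-limit of $\omega_n$ with its pointwise a.e.\ limit. Everything downstream of this identification — the Nehari-type identity, the Sobolev inequality, and the arithmetic leading to the exponent $N/(ps)$ — is routine.
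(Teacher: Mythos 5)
Your proof is correct, but it takes a genuinely different route from the paper's. The paper stays entirely inside the Concentration-Compactness framework: it tests problem \eqref{probcrit} with $(1-\psi_\delta)u_n$, where $\psi_\delta$ is a sum of cut-offs localised at the finitely many concentration points, and passes to the limit in $n$ and then in $\delta$ to obtain $\nu(\R^N\setminus\mathscr{C})\geq\mu(\R^N\setminus\mathscr{C})$. Via \eqref{mu}--\eqref{nu} this yields the one-sided comparison $\int|u|^{p^*}\,dx\geq\int|D^su|^p\,dx$, which combined with the Sobolev inequality produces both bounds in \eqref{absc}. You instead pass to the limit in the full equation — weak $L^{p'}(\R^{2N})$ convergence of the kernel $(u_n(x)-u_n(y))^{p-1}|x-y|^{-(N+ps)/p'}$ together with weak $L^{(p^*)'}$ convergence of $|u_n|^{p^*-2-\eps_n}u_n$ — and identify $u$ as a weak solution of the critical limiting problem, whence the Nehari identity $[u]_{s,p}^p=|u|_{p^*}^{p^*}$. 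This is an equality rather than an inequality, so it gives slightly more than the paper extracts; in exchange you have to do the full (and, for $p\neq 2$, nontrivial) limit in the nonlinear operator, which is precisely the obstacle you flag. The paper does invoke exactly this kind of argument elsewhere (in the proof of Theorem~\ref{H-E}, to show $u$ weakly solves \eqref{crit2}), so your route is consistent with the authors' toolkit, just deployed earlier.

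One small but real point to tighten: you prove the limiting equation against $\varphi\in C^\infty_c(\Omega)$ and then ``by density'' test with $u$ itself. For a \emph{general} bounded open set $\Omega$, density of $C^\infty_c(\Omega)$ in $W^{s,p}_0(\Omega)$ as defined in this paper (functions on $\R^N$ vanishing a.e.\ outside $\Omega$ with finite Gagliardo seminorm) is not automatic. This is easily repaired: the critical-point formulation of \eqref{probcrit} already gives the weak identity against \emph{all} test functions $\phi\in W^{s,p}_0(\Omega)$, not just smooth ones, so you can set $\phi=u$ in the $u_n$-equation from the outset and pass to the limit using exactly the same weak convergences in $L^{p'}(\R^{2N})$ and $L^{(p^*)'}(\Omega)$. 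No density result is then needed.
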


\begin{proof}
Let ${\mathscr C}=\{x_1,\dots,x_M\}$ be the concentration set, and let $\varphi_\delta$ be the cut-off functions as introduced in \eqref{defphi}. Define, for any $\delta>0$ small enough, the function
\[
\psi_\delta(x):=\sum_{i=1}^M\varphi_\delta(x-x_i).
\]
Proceeding as in Lemma \ref{boundbelow}, testing \eqref{probcrit} with $(1-\psi_\delta)u_n$ and letting 
$n\to\infty$, we obtain
\begin{equation}
\label{comparat}
0\geq \int_{\R^N}(1-\psi_\delta)\, d\mu-\int_{\R^N}(1-\psi_\delta)\, d\nu-o(1),\quad\text{with $o(1)\to 0$ as $\delta \downarrow 0$.}
\end{equation}
Since $\psi_\delta=1$ on $\bigcup_{i=1}^MB_\delta(x_i)$ and $\psi_\delta=0$ on $\bigcap_{i=1}^MB_{2\delta}^c(x_i)$, we get
$$
\lim_{\delta \downarrow 0}\int_{\R^N}(1-\psi_\delta)\, d\nu=\nu(\R^N\setminus {\mathscr C}),\qquad
\lim_{\delta \downarrow 0}\int_{\R^N}(1-\psi_\delta)\, d\mu=\mu(\R^N\setminus {\mathscr C}),
$$
yielding in turn, by formulas \eqref{comparat} and \eqref{mu}, 
\[
\int_{\R^N} |u|^{p^*}\, dx=\nu(\R^N\setminus {\mathscr C})
\geq \mu(\R^N\setminus {\mathscr C})\geq  \int_{\R^N} |D^s u|^p\, dx.
\]
Using also Sobolev's inequality we deduce \eqref{absc} as long as $u\neq 0$.
\end{proof}

\vskip 2pt
\noindent
{\em Proof of Theorem \ref{H-E}.} By Lemma \ref{sign} we can suppose that $u_n\geq 0$, for sufficiently small $\eps_n>0$. Let $\mu$, $\nu$ be given in Theorem \ref{lions} and let ${\mathscr C}=\{x_1,\dots,x_M\}$ be their concentration set.
If $u\neq 0$, by the previous lemma and \eqref{masse}, we have
 \begin{equation}
 \label{sing}
 c=\lim_{\eps\to 0}\left(\frac{1}{p}-\frac{1}{p^*-\eps}\right)[u_n]_{s, p}^p=\frac{s}{N}\mu(\R^N)\geq\frac{s}{N}\int_{\R^N}|D^su|^p\, dx+\frac{s}{N} \sum_{i=1}^M\mu_j\geq \frac{s}{N}(M+1)\S^{\frac{N}{ps}}.
  \end{equation}
 Therefore $M=0$ by assumption \eqref{Bsl}. To prove that $u$ weakly solves 
 \begin{equation}
 \label{crit2}
 \begin{cases}
 (-\Delta_p)^s u=u^{p^*-1}&\text{in $\Omega$},\\
 u=0&\text{in $\Omega^c$},
 \end{cases}
 \end{equation}
  it suffices to show that, up to subsequences, $f_n:=u_n^{p^*-1-\eps_n}$ weakly converges to $f:=u^{p^*-1}$ in $L^{{p^*}'}(\R^N)$.  By compactness in $L^p(\Omega)$ we can  assume that $u_n\to u$ pointwise a.e. in $\R^N$ so that $f_n\to f$ pointwise a.e. in $\R^N$. Moreover 
\[
\begin{split}
\int_{\R^N}|f_n|^{\frac{p^*}{p^*-1}}\, dx&=\int_\Omega u_n^{p^*-\eps_n\frac{p^*}{p^*-1}}\, dx\\
&\leq \left(\int_\Omega |u_n|^{p^*}\, dx\right)^{1-\frac{\eps_n}{p^*-1}}|\Omega|^{\frac{\eps_n}{p^*-1}},
\end{split}
\]
so that $f_n \rightharpoonup f$ 
in $L^{{p^*}'}(\R^N)$ as $n\to\infty$. It follows that $u\in W^{s, p}_0(\Omega)$ weakly solves 
\eqref{crit2} in $\Omega$. We finally prove that $u_n\to u$ strongly in $W^{s,p}_0(\Omega)$. 
By H\"older's inequality it holds
\[
\int_{\R^N}u_n^{p^*}\, dx\geq \left(\int_{\R^N}u_n^{p^*-\eps_n}\, dx\right)^{\frac{p^*}{p^*-\eps_n}}|\Omega|^{-\frac{\eps_n}{p^*-\eps_n}},
\]
and thus, up to subsequences
\[
[u]_{s, p}^p=|u|_{p^*}^{p^*}=\nu(\R^N)=\varlimsup_{n} |u_n|_{p^*}^{p^*}\geq \varlimsup_{n} |u_n|_{p^*-\eps_n}^{p^*}|\Omega|^{-\frac{\eps_n}{p^*-\eps_n}}=\varlimsup_{n}[u_n]_{s,p}^{p\frac{p^*}{p^*-\eps_n}}= \varlimsup_{n}[u_n]_{s,p}^p.
\]  
Being $W^{s,p}_0(\Omega)$ uniformly convex, the claim follows.\
Suppose on the other hand that $u=0$.\ From formula \eqref{nu} we get $d\nu=\sum_{j=1}^M\nu_j\delta_{x_j}$ and we know that $d\mu\geq \sum_{j=1}^M\mu_j\delta_{x_j}.$ On the other hand
formula \eqref{munu}, which holds for arbitrary $\varphi\in C^\infty_c(\R^N)$ with $\varphi\geq 0$, implies $d\nu\geq d\mu$.\  Therefore
${\rm supp}(\mu)\subseteq {\rm supp}(\nu)={\mathscr C}$, which yields $d\mu=\sum_{j=1}^M\mu_j\delta_{x_j}$.\
%\footnote{Non vedo bene che il supporto di $\mu$ non esce da ${\mathscr C}$, magari 
%	dilla piu' esplicita. \textcolor{red}{perch\'e $\nu\geq \mu$} }
Thus \eqref{sing} forces $M=1$ and $c=\frac{s}{N}\mu_1$, where $\mu_1>0$.
\qed

  \section{The case $p=2$}

Given $\Omega\subseteq \R^N$, we let in this section $\delta(x)={\rm dist}(x, \Omega^c)$. We will prove the following

\begin{theorem}
\label{thmstima}
Let $\Omega$ be a bounded $C^{1,1}$ domain. There exists a constant $d=d(\Omega)>0$ such that any weak solution of 
\begin{equation}
\label{prob2}
\begin{cases}
(-\Delta)^s u=f(u)&\text{in $\Omega$},\\
u=0&\text{in $\Omega^c$},
\end{cases}
\end{equation}
with $f\in {\rm Lip}_{\rm loc}(\R)$ such that
\begin{equation}
\label{subcrit}
 0\leq f(t)\leq C(1+|t|^{p^*}),\qquad r\mapsto \frac{f(r^{N-2s} t)}{r^{N+2s}} \quad \text{is non-increasing for any $t\geq 0$}
 \end{equation}
 satisfies for any $x_0\in \partial\Omega$, with exterior normal $\nu$
 \begin{equation}
 \label{stima}
 \xi_1<\xi_2\leq 2d\quad \Rightarrow \quad u(x_0-\xi_1\nu)< |1+2d|^{N-2s} u(x_0-\xi_2\nu).
 \end{equation}
 \end{theorem}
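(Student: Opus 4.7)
The plan is to combine the fractional Kelvin transform with the nonlocal moving-planes method of \cite{BMSc}. Since $\Omega$ is bounded and $C^{1,1}$, it satisfies a uniform exterior ball condition, so after a harmless rescaling (which preserves the structural assumptions in \eqref{subcrit}) I may assume that at every boundary point $x_0$ the closed ball $\overline{B_1(z_0)}$ with $z_0:=x_0+\nu$ meets $\overline\Omega$ only at $x_0$. Introduce the Kelvin inversion $\Phi(y):=z_0+(y-z_0)/|y-z_0|^2$ and set
\[
v(y):=|y-z_0|^{-(N-2s)}\,u(\Phi(y)),\qquad y\in\R^N\setminus\{z_0\}.
\]
Then $v$ is compactly supported in $\overline{\Phi(\Omega)}\subset\overline{B_1(z_0)}$, and the conformal covariance of $(-\Delta)^s$ under $\Phi$ yields
\[
(-\Delta)^s v(y)\;=\;|y-z_0|^{-(N+2s)}\,f\bigl(|y-z_0|^{N-2s}\,v(y)\bigr)\;=:\;g(|y-z_0|,v(y))\quad\text{in }\Phi(\Omega).
\]
Writing $r:=|y-z_0|$, the second condition in \eqref{subcrit} states precisely that $r\mapsto g(r,t)$ is non-increasing on $(0,\infty)$ for every $t\geq 0$; this radial monotonicity of the right-hand side is the structural ingredient that fuels moving planes.

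I next apply the moving-planes scheme of \cite{BMSc} to $v$, with the family of hyperplanes $T_\lambda:=\{y:(y-z_0)\cdot\nu=-\lambda\}$, the plane $T_0$ passing through $z_0$ and $T_\lambda$ sweeping into $\Phi(\Omega)$ as $\lambda$ increases. For $y$ on the $-\nu$ side of $T_\lambda$, its reflection $y^\lambda$ satisfies $|y^\lambda-z_0|\leq|y-z_0|$, and the radial monotonicity of $g$ forces $g(|y^\lambda-z_0|,t)\geq g(|y-z_0|,t)$, which is the \emph{favorable} comparison required to run the nonlocal reflected-cap argument. Combined with the strong maximum principle for $(-\Delta)^s$, the scheme yields the strict inequality $v(y)<v(y^\lambda)$ for every $y\in\Phi(\Omega)$ with $(y-z_0)\cdot\nu<-\lambda$, valid for all $\lambda\in(0,\lambda_0]$, where $\lambda_0=\lambda_0(\Omega)>0$ is the maximal displacement for which the reflection of the cap $\Phi(\Omega)\cap\{(y-z_0)\cdot\nu<-\lambda\}$ still sits inside $\Phi(\Omega)$. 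I then fix $d=d(\Omega)>0$ so that this step covers the entire range of parameters $\xi\in[0,2d]$ in the statement.

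Finally I pull the monotonicity back through Kelvin. For $x=x_0-\xi\nu$ with $\xi\geq 0$, $\Phi^{-1}(x)=z_0-(1+\xi)^{-1}\nu$, whence $u(x)=(1+\xi)^{-(N-2s)}\,v(y(\xi))$ with $y(\xi):=z_0-(1+\xi)^{-1}\nu$. Given $0\leq\xi_1<\xi_2\leq 2d$, reflecting $y(\xi_1)$ across the midpoint hyperplane between $y(\xi_1)$ and $y(\xi_2)$ on the normal line (which corresponds to some $\lambda\in(0,\lambda_0]$) and invoking the monotonicity just established yield $v(y(\xi_1))<v(y(\xi_2))$. Therefore
\[
u(x_0-\xi_1\nu)\;=\;\Big(\frac{1+\xi_2}{1+\xi_1}\Big)^{N-2s}\,(1+\xi_2)^{-(N-2s)}\,v(y(\xi_1))\;<\;\Big(\frac{1+\xi_2}{1+\xi_1}\Big)^{N-2s}\,u(x_0-\xi_2\nu),
\]
and the elementary bound $(1+\xi_2)/(1+\xi_1)\leq 1+\xi_2\leq 1+2d$ yields \eqref{stima}. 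The delicate point will be the moving-planes step: one must carefully adapt the nonlocal comparison argument of \cite{BMSc} to the \emph{non-autonomous} equation $(-\Delta)^s v=g(|y-z_0|,v)$, exploiting the radial monotonicity of $g$ on the reflected cap together with the growth bound in \eqref{subcrit}, and checking that the strong maximum principle delivers strict inequality up to $\lambda\to 0^+$; quantifying $d=d(\Omega)$ then amounts to tracking the largest admissible displacement $\lambda_0$ in terms of the global geometry of $\Omega$ and of the exterior-ball radius fixed in the first paragraph.
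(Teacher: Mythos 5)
Your strategy is essentially the paper's: Kelvin-transform through an externally tangent unit ball (which $C^{1,1}$ regularity supplies uniformly) to turn $u$ into a solution $v$ on $\Phi(\Omega)\subset B_1(z_0)$ of a radially monotone nonlinearity, run the nonlocal moving-plane method of \cite{BMSc}, and pull the resulting monotonicity along the normal segment back through the inversion. The final algebra, giving $u(x_0-\xi_1\nu)<\bigl(\tfrac{1+\xi_2}{1+\xi_1}\bigr)^{N-2s}u(x_0-\xi_2\nu)$, also matches the paper.

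There are, however, two genuine issues in your moving-plane step. First, the parametrization sweeps in the wrong direction: with $T_\lambda=\{(y-z_0)\cdot\nu=-\lambda\}$, the plane tangent to $\Phi(\Omega)$ at $x_0=z_0-\nu$ is $T_1$, and the cap $\Phi(\Omega)\cap\{(y-z_0)\cdot\nu<-\lambda\}$ shrinks as $\lambda\uparrow 1$ and grows as $\lambda\downarrow 0$. The moving-plane method therefore starts at $\lambda$ just below $1$ and pushes $\lambda$ \emph{downward}, giving the cap-reflection inclusion and the monotonicity only on some interval $[\,\lambda_0,1)$ with $\lambda_0$ \emph{close to $1$}, not on $(0,\lambda_0]$ as you write; for a general $C^{1,1}$ domain one cannot reach $\lambda\to 0^+$. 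Concretely, the midpoint hyperplane between $y(\xi_1)=z_0-\nu/(1+\xi_1)$ and $y(\xi_2)=z_0-\nu/(1+\xi_2)$ sits at $\lambda=\tfrac12\bigl(\tfrac1{1+\xi_1}+\tfrac1{1+\xi_2}\bigr)$, which for $\xi_1,\xi_2\in[0,2d]$ lies in $[1/(1+2d),\,1)$, i.e.\ near $1$. So your conclusion is saved \emph{because} the application only invokes planes near the tangent plane, but your stated range of validity is inverted and would not be achievable. Second, you do not address why the admissible inset $1-\lambda_0$ can be chosen uniformly over $x_0\in\partial\Omega$; the theorem needs a single $d=d(\Omega)$, and the paper obtains it by a compactness argument over $\partial\Omega$ (citing the regularity of the inverted domains via \cite[Theorem 5.7]{F}). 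You should add that uniformity explicitly.
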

 
 \begin{remark}
 Notice that, if $u\geq 0$ a.e.\ in $\Omega$, then by the strong maximum principle, $u>0$ in $\Omega$ or $u\equiv 0$. Regarding the hypothesis \eqref{subcrit}, consider the homogeneous case $f(t)=t^{q-1}$, $N>2s$. Then the monotonicity property required in \eqref{subcrit} is satisfied if and only if $q\leq 2^*$. Finally, it is worth noting that the proof actually only requires that $\partial\Omega$ satisfy a uniform exterior sphere condition, so that \eqref{stima} actually holds, e.g., in convex domains.
 \end{remark}
 
 The main point of the theorem is that  the constant $d$ is {\em geometric} and independent on the nonlinearity, 
 as long as \eqref{subcrit} holds. Let us now show how \eqref{stima} implies that the ground states $u_\eps$ cannot concentrate on $\partial\Omega$.
To the best of our knowledge this argument is new.

\vskip 2pt
\noindent
{\em Proof of Theorem \ref{G-S2}}.\ Let $d>0$ be given in the previous theorem, so small that 
$$
\Pi(x):={\rm Argmin}\{|x-y|:y\in \partial\Omega\},\quad x\in N_t,
$$ 
is well defined and $C^{1,1}$, where $N_t:=\{x\in \overline\Omega:\delta(x)\leq t\}$ for any $0\leq t\leq 2d$. Then, denoting by $\nu_z$ the exterior normal to $\partial\Omega$ at $z$, the map  
\[
(z, \xi)\in \partial\Omega\times [0, 2d]\to \Phi(z, \xi)=z-\xi\nu_z\in N_{2d},
\]
is a bi-Lipschitz homeomorphism, with 
\begin{equation}
\label{lipphi}
\|{\rm det} D\Phi\|_{\infty}+\|{\rm det} D\Phi^{-1}\|_{\infty}\leq L=L(\Omega).
\end{equation}
 Suppose by contradiction that $\bar x\in \partial\Omega$, where $\bar x$ is the concentration point
 given by Theorem \ref{G-S}. Then for any small $\theta>0$ to be defined later, there exists a sufficiently large $n\in\N$ such that 
\[
\int_{N_{d}}|u_n|^{2^*}\, dx\geq \S^{\frac{N}{2s}}/2, \qquad \int_{N_{2d}\setminus N_d}|u_n|^{2^*}\, dx<\theta.
\]
Using the change of variables given by $\Phi$, the bound \eqref{lipphi} and \eqref{stima}, we have
\[
\begin{split}
\int_{N_{d}}|u_n|^{2^*}\, dx&\leq L\int_{\partial\Omega\times [0, d]}|u_{n}|^{2^*}(z-\xi\nu_z)\, dz\, d\xi\\
&\leq |1+2d|^{2N}L\int_{\partial\Omega\times [d, 2d]}|u_{n}|^{2^*}(z-\xi\nu_z)\, dz\, d\xi\\
&\leq  |1+2d|^{2N}L^2\int_{N_{2d}\setminus N_d}|u_n|^{2^*}\, dx.
\end{split}
\]
So that, choosing $\theta |1+2d|^{2N}L^2<\S^{\frac{N}{2s}}/2$, we get a contradiction.
\qed

\vskip 6pt
\noindent
{\em Proof of Theorem \ref{thmstima}}. The  proof relies on the moving plane method, performed after suitable Kelvin transforms through externally tangent balls.
 First of all, being $\Omega$ a $C^{1,1}$ bounded domain, there exists $r>0$ such that at any point $x_0\in \partial\Omega$ there exists an open  ball $B_r$ of radius $r$ with
\[
 B_r\subseteq \Omega^c,\qquad \partial B_r\cap\partial\Omega=\{x_0\}.
\]
 We can scale problem \eqref{prob2} in $r^{-1}\Omega$ and suppose that $r=1$. Henceforth, we will denote by $B(z)$ a ball of radius $1$ and center $z$.
 Given a function $u\in W^{s,p}_0(\Omega)$ and a ball $B(z)\subseteq \Omega^c$ such that $\partial B(z)\cap \partial\Omega=\{x_0\}$, the Kelvin transform of $u$ through $B(z)$ is given by 
 \[
 u^*(x)=\frac{1}{|x-z|^{N-2s}}u\Big(z+\frac{x-z}{|x-z|^2}\Big).
 \]
 The map ${\cal I}_z(x):=z+(x-z)/|x-z|^2$ brings $\Omega$ to $\Omega_z:={\cal I}_z(\Omega)\subseteq B(z)$. Furthermore,
 the function $u^*\in W^{s,p}_0(\Omega_z)$ satisfies
 \[
( -\Delta)^s u^*(x)=\frac{1}{|x-z|^{N+2s}}(-\Delta)^s u({\cal I}_z(x)).
\]
Given an externally tangent ball $B(z)$, the Kelvin transform thus brings any solution $u$ of \eqref{prob2} to a solution $v:=u^*$ in $\Omega_z\subseteq B(z)$ of 
\[
\begin{cases}
(-\Delta)^sv=g(x, v)&\text{in $\Omega_z$},\\
v=0&\text{in $\Omega_z^c$},
\end{cases}
\]
where $z\notin \overline{\Omega_z}$ and the nonlinearity
\[
g(x, t):=g(|x-z|, t)=\frac{f(|x-z|^{N-2s} t)}{|x-z|^{N+2s}}
\]
 satisfies the following properties:
\begin{enumerate}
\item
$g$ is Caratheodory, locally Lipschitz in the second variable and $C^1$ in the first;
\item
$g(r, t)$ is non-increasing in the first variable.
\end{enumerate}
From any $x_0\in \partial\Omega$, we will now construct the exterior tangent ball $B(z)$, $z=z(x_0)$ as above, and apply the moving plane method to $\Omega_z$.
Without loss of generality we now suppose $z=0$, calling $\Omega_0=\widetilde\Omega$. Let $\nu_{x_0}$ be the interior normal to $B(z)$ at $x_0=\partial\widetilde\Omega\cap \partial B$ and observe that 
\[
\inf_{\widetilde\Omega} x\cdot \nu_{x_0}=x_0\cdot \nu=-1,
\]
 due to our normalization. For any $\lambda>-1$ we let
\[
\widetilde{\Omega}^\nu_\lambda=\widetilde\Omega\cap \{x\cdot \nu<\lambda\}, \qquad x^\nu_\lambda=R^\nu_{\lambda}(x):=x+2(\lambda-x\cdot \nu)\nu,
\]
the latter being the  reflection of $x$ through the hyperplane $\{x\cdot\nu= \lambda\}$. 
By the regularity of $\partial\widetilde\Omega$, there is $\eps>0$ such that for all  $\lambda\in ]-1,-1+\eps]$, $R^\nu_\lambda(\widetilde\Omega^\nu_\lambda)\subseteq \widetilde\Omega$ (cf.\ \cite[Theorem 5.7]{F}). 
Moreover, by compactness, this $\eps>0$ can be chosen independently of the particular 
$x_0$ from which the construction started.\ Moreover, if $\eps<1$, it holds
\[
|x^\nu_\lambda|\leq |x|\quad \text{ for any $\lambda\in ]-1, -1+\eps]$, $x\in \widetilde\Omega^\nu_\lambda$}.
\]
Therefore, being $r\mapsto g(r, t)$ non-increasing for any $t\geq 0$, 
\[
g(x, t)\leq g(x^\nu_\lambda, t),\quad \text{ for any $\lambda\in ]-1, -1+\eps]$, $x\in \widetilde\Omega^\nu_\lambda$, $t\geq 0$}.
\]
The moving plane method as per \cite[Proposition 4.4]{BMSc} can be applied, giving that
\[
s\mapsto v(x_0+s\nu) \quad \text{is increasing for $s\in [0, \eps]$}.
\]
Recalling the definition of $v$ and $\nu=z-x_0$, we obtain
\[
s\mapsto \frac{1}{|1-s|^{N-2s}}u\Big(x_0-\nu\frac{s}{1-s}\Big)\quad \text{is increasing for $s\in [0, \eps]$}, 
\]
or, for $\xi=s/(1-s)$, 
\[
\xi\mapsto |1+\xi|^{N-2s}u(x_0-\xi \nu)\quad \text{is increasing for $\xi\in [0, \frac{\eps}{1-\eps}]$}.
\]
Since $\Omega$ is $C^{1,1}$, there is $\bar d\in ]0, 1[$ such that $\Pi(x)={\rm Argmin}\{|x-y|:y\in \partial\Omega\}$ 
is well defined and $C^{1,1}$. We define $d$ with $2d<\min\{\bar d, \eps\}<\eps/(1-\eps)$ so that the previous monotonicity gives
\[
\xi_1\leq d<\xi_2\leq 2d\quad \Rightarrow \quad u(x_0-\xi_1\nu)\leq \frac{|1+\xi_2|^{N-2s}}{|1+\xi_1|^{N-2s}}u(x_0-\xi_2\nu)\leq |1+2d|^{N-2s} u(x_0-\xi_2\nu),
\]
concluding the proof. \qed

\vskip 6pt
\noindent
{\em Proof of Theorem \ref{anello}}. Rescaling the problem if necessary we can reduce to 
$\Omega=B_{r_2/r_1}\setminus B_1$. Let $R=r_2/r_1$ and let, for $\eps>0$ sufficiently small, 
\[
\bar u_\eps=r_1^{\frac{2s}{2^*-2-\eps}}u_\eps(r_1x)
\]
 be the nonnegative solution to 
\[
\begin{cases}
(-\Delta)^s \bar u_\eps= \bar u_\eps^{2^*-1-\eps}&\text{in $\Omega$},\\
\bar u_\eps=0&\text{in $\Omega^c$}.
\end{cases}
\]
Using the minimizing property \eqref{gs} and scaling, it is readily checked that $\bar u_\eps$ is a ground state solution of the previous problem, for any $\eps>0$.
We proceed by contradiction and suppose that $|\bar u_\eps|^{2^*}\weakto \S^{\frac{N}{2s}}\delta_{x_0}$ for some $1<|x_0|<2/(1+R^{-1})$, letting
$M=2/(1+R^{-1}).$ 
We omit for the time being the dependance on $\eps$ and call $\bar u_\eps=u$. The ball $B_1$ is externally tangent to $\Omega$ and we can perform the Kelvin transform through $B_1$, obtaining a solution to 
\[
\begin{cases}
(-\Delta)^sv=\frac{v^{2^*-1-\eps}}{|x|^{\eps(N-2s)}}&\text{in ${\cal I}\Omega$},\\
v=0&\text{in ${\cal I}\Omega^c$}.
\end{cases}
\]
Notice that ${\cal I}\Omega=B_1\setminus B_{1/R}=:\widetilde{\Omega}$. 
  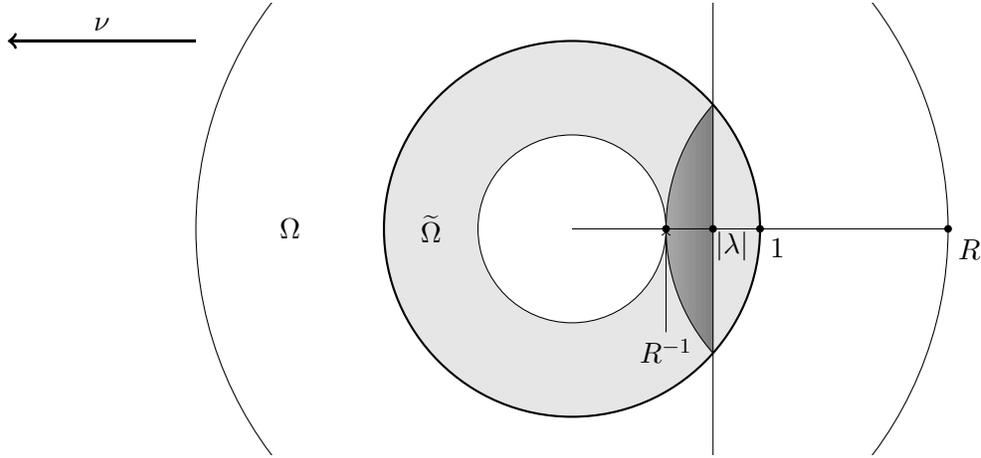
\begin{figure}[!tbp]
\centering
\begin{tikzpicture}[scale=2.5]
\clip (-3, -1.2) rectangle (3, 1.2);
\begin{scope}

\filldraw[gray!20] (0,0) circle (1cm);
\filldraw[white] (0,0) circle (0.5cm);

\end{scope}
\begin{scope}
\clip (1.5,0) circle (1cm);
\shade [left color=gray, right color=white, shading=axis, shading angle=270] (0, -2) -- (0.75, -2) -- (0.75, 2) -- (0, 2); 
\end{scope}
\draw (0,0) circle (0.5cm);
\draw[->] (0.5, -0.55) -- (0.5, -0.01);
\draw (0.5, -0.65) node{$R^{-1}$};
\filldraw (0.5,0) circle (0.5pt);
\draw[thick] (0,0) circle (1cm);
\draw (1, 0) node[below right]{$1$};
\filldraw (1,0) circle (0.5pt);
\draw (0,0) circle (2cm);
\draw (2,0) node[below right]{$R$};
\filldraw (2,0) circle (0.5pt);
\draw (0,0)--(2, 0);
\draw (-1.5,0) node{$\Omega$};
\draw (-0.75,0) node{$\widetilde{\Omega}$};
\begin{scope}
\clip (0,0) circle (1cm);
\draw (1.5,0) circle (1cm);
\end{scope}
\draw[thin] (0.75, -2) -- (0.75, 2); 
\draw (0.85, -0.09) node{$|\lambda|$};
\filldraw (0.75,0) circle (0.5pt);
\draw[very thick, ->] (-2, 1) -- (-3, 1); 
\draw (-2.5, 1) node[above]{$\nu$}; 
\end{tikzpicture}
\caption{The optimal cap after inversion through the inner sphere}
\label{inv1}
\end{figure}
Elementary geometric considerations (see figure \ref{inv1}) show that given any unit vector $\nu$, $a(\nu)=-1$ and  
\[
\frac{1}{R}+2(1-|\lambda|)<1\quad \Rightarrow \quad R^\nu_\lambda(\widetilde{\Omega}^\nu_\lambda)\subseteq \widetilde{\Omega}.
  \] 
 The previous condition actually gives the so called "optimal cap" (which in this case coincides with the maximal one), i.e.
 \[
 R^\nu_\lambda(\widetilde{\Omega}^\nu_\lambda)\subseteq \widetilde{\Omega}\quad \Leftrightarrow\quad -1<\lambda<-\frac{1+R^{-1}}{2}=-M^{-1},
\]
and the moving planes method ensures that 
\[
s\mapsto v((s-1)\nu) \quad \text{is increasing for $0\leq s\leq 1-M^{-1}$}.
\]
In terms of $u$ we therefore have, for any $x_0$ such that $|x_0|=1$ and for  $\nu=-x_0$, $t=1/(1-s)$,
\[
t\mapsto t^{N-2s} u(tx_0)\quad \text{is increasing for $1\leq t\leq M$}.
\]
Now if $|\bar x|<M-2\theta$ with $\theta>0$, the previous monotonicty ensures that 
\[
u_\eps(tx_0)<\frac{r^{N-2s}}{t^{N-2s}}u_\eps(rx_0)\leq M^{N-2s}u_\eps(rx_0),\quad \forall 1\leq t\leq M-\theta\leq r\leq M,
\]
and thus 
\[
\S^{\frac{N}{2s}}=\lim_{\eps\downarrow 0}\int_{B_{M-\theta}\setminus B_1} u_\eps^{2^*}\, dx\leq C_\theta M^{N+2s}\lim_{\eps\downarrow 0}\int_{B_M\setminus B_{M-\theta}}u_\eps^{2^*}\, dx=0,
\]
giving a contradiction. Similarly one can proceed if $|\bar x|>M+2\theta$: using this time Kelvin transform through an exterior unit ball of radius one (see figure \ref{inv2}), we obtain for any unit vector $x_0$ 
\[
t\mapsto (t+1)^{N-2s}u((R-t)x_0) \quad \text{is increasing for $0\leq t\leq M$},
\]
and integrating $u_\eps^{2^*}$ over $B_R\setminus B_{M+\theta}$ and $B_{M+\theta}\setminus B_M$ gives the desired contradiction.\qed

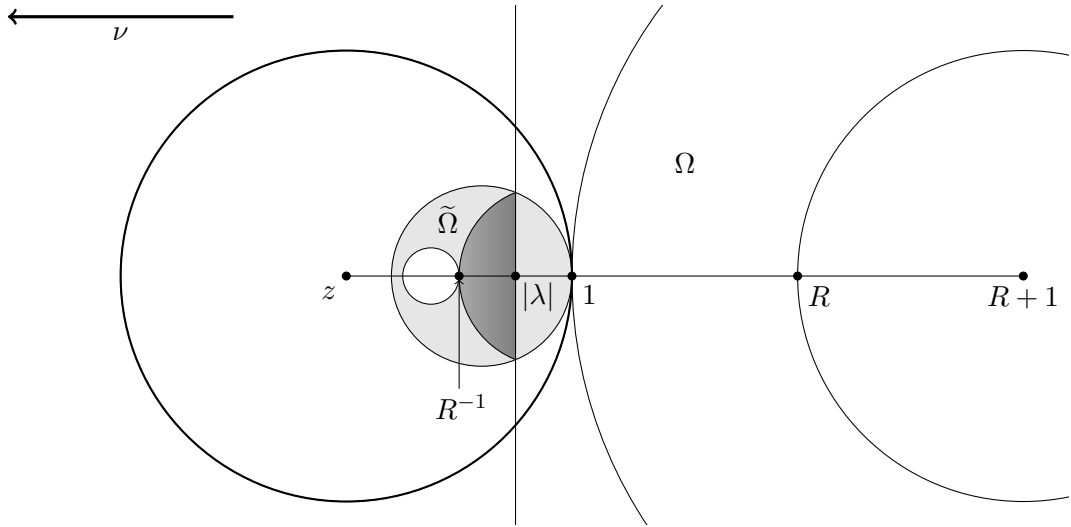
\begin{figure}[!tbp]
\centering
\begin{tikzpicture}[scale=3]
\clip (-1.5, -1.1) rectangle (3.2, 1.2);
\draw (0,0) circle (1cm);
\filldraw (0,0) circle (0.5pt);
\draw (0,0) node[below left]{$z$};

\begin{scope}
\filldraw[gray!20] (0.6,0) circle (0.4cm);
\filldraw[white] (0.375, 0) circle (0.125cm);
\end{scope}
\draw (0.6,0) circle (0.4cm);
\draw (0.375, 0) circle (0.125cm);

\begin{scope}
\clip (0.9,0) circle (0.4cm);
\shade [left color=gray, right color=white, shading=axis, shading angle=270] (0, -2) -- (0.75, -2) -- (0.75, 2) -- (0, 2); 
\end{scope}

\draw[->] (0.5, -0.5) -- (0.5, -0.015);
\draw (0.51, -0.58) node{$R^{-1}$};
\filldraw (0.5,0) circle (0.5pt);
\draw[thick] (0,0) circle (1cm);
\draw (1, 0) node[below right]{$1$};
\filldraw (1,0) circle (0.5pt);
\draw (3,0) circle (2cm);
\draw (3,0) circle (1cm);
\draw (2,0) node[below right]{$R$};
\filldraw (2,0) circle (0.5pt);
\draw (3,0) node[below ]{$R+1$};
\filldraw (3,0) circle (0.5pt);
\draw (0,0)--(3, 0);
\draw (1.5,0.5) node{$\Omega$};
\draw (0.45,0.25) node{$\widetilde{\Omega}$};
\begin{scope}
\clip (0.6,0) circle (0.4cm);
\draw (0.9,0) circle (0.4cm);
\end{scope}
\draw[thin] (0.75, -2) -- (0.75, 2); 
\draw (0.85, -0.09) node{$|\lambda|$};
\filldraw (0.75,0) circle (0.5pt);
\draw[very thick, ->] (-0.5, 1.15) -- (-1.5, 1.15); 
\draw (-1, 1) node[above]{$\nu$}; 
\end{tikzpicture}
\caption{The optimal cap after inversion through the outer sphere}
\label{inv2}
\end{figure}

\bigskip

\medskip
%\bigskip

\end{document}